\documentclass[12pt]{article}
\usepackage{graphicx}
\usepackage{authblk} 
\usepackage{amsmath}
\usepackage{amssymb}
\usepackage{amsthm}
\usepackage{color}
\usepackage{pagecolor}
\usepackage{url}
\usepackage{a4wide}
 \usepackage{youngtab}
\usepackage{hyperref}
\hypersetup{colorlinks=false, linkbordercolor={1 1 1}, citebordercolor={1 1 1}, urlbordercolor={1 1 1}} 
\usepackage[normalem]{ulem}
\usepackage[section]{placeins} 

\setlength{\marginparwidth}{22mm}

\newtheorem{theorem}{Theorem}
\newtheorem{proposition}[theorem]{Proposition}

\newtheorem{definition}[theorem]{Definition}

\def\paragraph#1{{\vskip3mm\noindent \bf #1 }}

\newcommand{\Z}{\mathbb Z}
\newcommand{\N}{\mathbb N}
\newcommand{\cA}{\mathcal A}
\newcommand{\cX}{\mathcal X}

\newcommand{\cE}{\mathcal E}

\newcommand{\cQ}{\mathcal Q}

\newcommand{\tth}{\mathtt h}
\newcommand{\ttn}{\mathtt n}
\newcommand{\ttt}{\mathtt t}
\newcommand{\ttN}{\mathtt N}

\newcommand{\tts}{\mathtt s}

\newcommand{\ttts}{\mathtt s^{\diamond}}

\definecolor{lightgray}{rgb}{0.83, 0.83, 0.83}
\definecolor{lavendermist}{rgb}{0.9, 0.9, 0.98}
\definecolor{backgreen}{RGB}{82, 167, 106}

\def\sqr{\vcenter{
         \hrule height.1mm
         \hbox{\vrule width.1mm height2.2mm\kern2.18mm\vrule width.1mm}
         \hrule height.1mm}}                  
\def\square{\ifmmode\sqr\else{$\sqr$\vskip 3mm}\fi}

\newcommand{\nn}{\nonumber}
\newcommand{\vep}{\varepsilon}

\definecolor{cmm}{rgb}{0,.6,0.4}
\definecolor{cmm'}{rgb}{.6,0,.4}

\parskip 2mm

\title{\textsc{Box-ball system: soliton and tree \\decomposition of excursions}}
\date{\today}

\author[1]{Pablo A. Ferrari}
\author[2]{Davide Gabrielli}
\affil[1]{\small \sl Universidad de Buenos Aires, {\tt pferrari@dm.uba.ar}}
\affil[2]{\small \sl Universit\`a di L'Aquila, {\tt gabriell@univaq.it}}

\begin{document}


\maketitle

\begin{abstract}
We review combinatorial properties of solitons of the Box-Ball system introduced by Takahashi and Satsuma in 1990 \cite{TS}. Starting with several definitions of the system, we describe ways to identify solitons and review a proof of the conservation of the solitons under the dynamics. Ferrari, Nguyen, Rolla and Wang 2018 \cite{FNRW} proposed a soliton decomposition of a configuration into a family of vectors, one for each soliton size. Based on this decompositions, the authors \cite{fg18} propose a family of measures on the set of excursions which induces invariant distributions for the Box-Ball System. Furthermore, we propose a new soliton decomposition which is equivalent to a branch decomposition of the tree associated to the excursion, see Le Gall \cite{MR942038}. A ball configuration distributed as independent Bernoulli variables of parameter $\lambda<1/2$ is in correspondence with a simple random walk with negative drift $2\lambda-1$ and infinitely many excursions over the local minima. In this case the soliton decomposition of the walk consists on independent double-infinite vectors of iid geometric random variables \cite{fg18}. We show that this property is shared by the branch decomposition of the excursion trees of the random walk
and discuss a corresponding construction of a Geometric branching process with independent but not identically distributed Geometric random variables.

\bigskip

\noindent {\em Keywords}: Box-Ball system, solitons, excursions, planar trees.

\smallskip

\noindent{\em AMS 2010 Subject Classification}:
37B15, 
37K40, 
60C05, 
82C23. 

\end{abstract}
\tableofcontents
\section{Introduction}
\label{S1}
The Ball-Box-System (BBS) is a cellular automaton introduced by Takahashi and Satsuma \cite{TS} describing the deterministic evolution of a finite number of balls on the infinite lattice $\mathbb Z$. A ball configuration $\eta$ is an element of $\{0,1\}^\Z$, where $\eta(i)=1$ indicates that there is a ball at box $i\in\Z$. For ball configurations with a finite number of balls, the dynamics is as follows. A carrier starts with zero load to the left of the occupied boxes, visits successively boxes from left to right and at each box proceeds as follows (a)  if the box is occupied, the carrier increases its load by one and the box becomes empty or (b) if the box is empty and the carrier load is positive, then the carrier load decreases its load by one and the box becomes occupied. This mechanism is illustrated with an example in Figure \ref{basket}.

\begin{figure}
	
	\centering
	
	\includegraphics{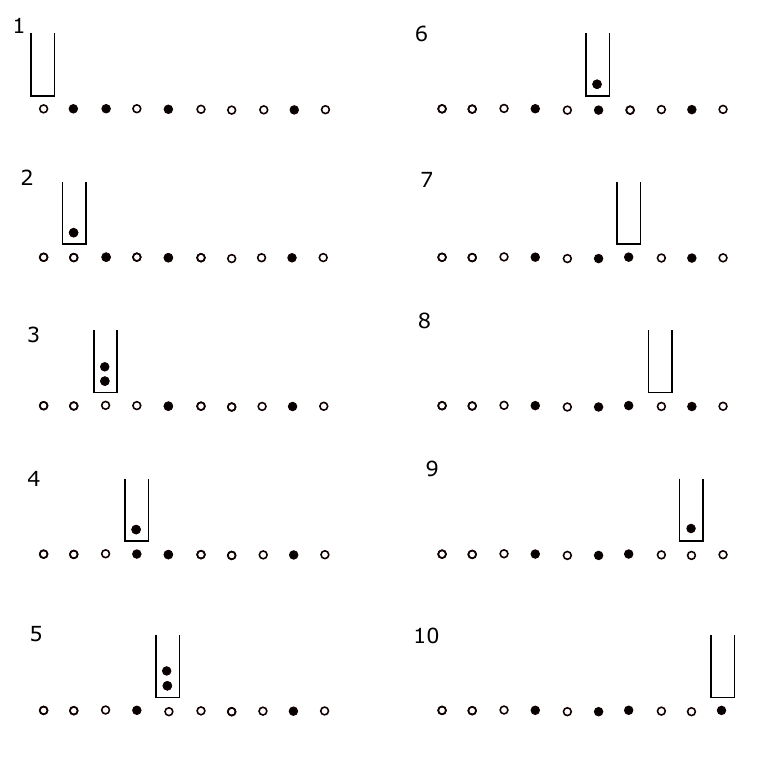}
	
	\caption{An example of the evolution of the Ball-Box-System. The initial configuration of balls $\eta$ is the one in the frame 1 where the boxes not drawn are all empty. The carrier starts empty on the left of the particles and moves to the right following the increasing order of the frames. In the last frame 10 the carrier is empty and will remain empty. The configuration $T\eta$ is the one drawn in the frame 10. A ball is denoted by $\bullet$ while an empty box by $\circ$.  }\label{basket}
	
\end{figure}
In \S\ref{bbs-sec} we describe  alternative equivalent descriptions of the dynamics. We denote $T\eta$ the configuration obtained after the carrier has visited all boxes in $\eta$, and $T^t\eta$ the configuration obtained after the iteration of this procedure $t$ times, for positive integer~$t$. The dynamics can be defined for suitable configurations with infinitely many balls  satisfying that ``there are more empty boxes than occupied boxes'' and conserves the set of configurations with density of balls less than $1/2$  \cite{FNRW}; see details in \S\ref{bbs-sec}. 

The main motivation of \cite{TS} was to identify  objects conserved by the dynamics that they called \emph{basic sequences}, later  called \emph{solitons} by \cite{LLP}; we follow this nomenclature. The Box-Ball system has been proposed as a discrete model with the same behavior of the Korteweg-de Vries equation \cite{TTMS}, an integrable partial differential equation having solitonic behavior.

Given a ball configuration $\eta$ a $k$-soliton consists of $k$ occupied boxes denoted $\tth_1,\dots,\tth_k\in\Z$ and $k$ empty boxes denoted $\ttt_1,\dots,\ttt_k\in\Z$. Takahashi and Satsuma showed that if $\eta$ has a finite number of occupied boxes, then all occupied box belongs to some soliton and proposed an algorithm to identify solitons.
We explain their algorithm in detail in \S\ref{secTSd}; for the moment we give the simplest example of $k$-soliton. Let $\eta$ have only $k$ balls occupying $k$ successive boxes, then the (only) $k$-soliton of $\eta$ consists on the $k$ successive occupied boxes  $\tth_1,\dots,\tth_k$ and the $k$ successive empty boxes $\ttt_1,\dots,\ttt_k\in\Z$ given by $\ttt_j=\tth_j+k$.

For the $\eta$ just described, the configuration $T\eta$ has balls in boxes  $\ttt_1,\dots,\ttt_k$ and no balls in the other boxes,  implying that $\eta'=T\eta$ has a $k$-soliton consisting of occupied boxes $\tth_1',\dots,\tth_k'$ with $\tth_j'=\ttt_j$ and empty boxes $\ttt_j'= \ttt_j+k$. Hence, in one step, an isolated $k$-soliton preserves its shape and moves $k$ steps forward. Iterating the evolution $t$ times, we conclude that not being other balls in the system, a $k$-soliton moves $kt$ boxes forward, that is, it travels at speed $k$. Since for different $k$'s the solitons have different speeds, they ``collide'', that is the order and the positions of $\tth_i$ and $\ttt_i$ of each soliton change. In fact, solitons can be identified for configurations with finite number of balls \cite{TS} and for suitable configurations $\eta$ with infinitely many balls \cite{FNRW} and moreover solitons are conserved by the dynamics \cite{TS} \cite{FNRW}; we explain this in detail in \S\ref{sec3}. Given a suitable configuration $\eta$, a $k$-soliton consists always on $k$ occupied boxes and $k$ empty boxes, but they are not necessarily consecutive and the empty boxes of the soliton may precede the occupied ones. In any case, different solitons occupy disjoint sets of boxes. The trajectory of each soliton can be identified along time \cite{FNRW}. When the distribution of the initial ball configuration is translation invariant and invariant for the dynamics, the asymptotic soliton speeds satisfy a system of linear equations \cite{FNRW} which is a feature of several other integrable systems \cite{cbs19}.

A ball configuration can be mapped to a walk indexed by boxes that jumps one unit up at occupied boxes and one unit down at empty boxes. If the configuration has density less than $\frac12$, then the walk has down \emph{records} and finite \emph{excursions} consisting on the pieces of configuration between two consecutive records. A ball configuration can be codified as a set of infinite vectors, based on the concept of \emph{slots}~\cite{FNRW}. Given a ball configuration with identified solitons, there are boxes called $k$-slots satisfying that any $k$-soliton is strictly in between two successive $k$-slots; in this case we say that the $k$-soliton is \emph{attached} to the left $k$-slot. To be more precise, the set of $k$-slots of $\eta$ consists on the records and the boxes belonging to any bigger soliton of the form $\tth_j$ or $\ttt_j$ for any $j>k$. Taking a ball configuration with a record at the origin, the $k$-slots are enumerated and then for each integer $i$,  the $i$-th coordinate of the $k$-component of the configuration is the number of $k$-solitons attached to the $i$-th $k$-slot. The obtained components can be composed again to recover the initial configuration $\eta$. A large part of the paper is dedicated to a complete explanation of these constructions.

It is useful to perform the decomposition in each excursion to obtain what \cite{fg18} call \emph{slot diagram}, a combinatorial object that encodes the structure of the excursion. We discuss also the relationship between the soliton decomposition of an excursion and other combinatorial objects as the excursion tree \cite{MR0290475,MR0370775,MR942038,evans, LG}, Catalan numbers  \cite{evans} and Dyck and Motzkin paths \cite{evans, LLP}.

A notable property proven by \cite{FNRW} is that the $k$-component of the configuration $T\eta$ is a shift of the $k$-component of $\eta$, the amount shifted depending on the $m$-components for $m>k$. As a consequence,  \cite{FNRW} prove that measures with independent and translation invariant soliton components are invariant for the dynamics.
In \cite{fg18} a special class of these measures is studied in detail. The papers \cite{CKST, croydon2019invariant} show families of invariant measures for the BBS based on reversible Markov chains on $\{0,1\}$.

The Box-Ball system is strictly related to several remarkable combinatorial constructions (see for example \cite{IKT, KTZ, LLP, MIT, YYT, YT}); we illustrate some of them. Sometimes, instead of giving formal proofs and detailed descriptions we adopt a more informal point of view trying to illustrate the different constructions through explicative examples.


\smallskip

The paper is organized as follows.

In \S\ref{sec2} we fix the notation and review several different equivalent definitions of the dynamics. We start considering the simple case of a finite number of balls. Then, following \cite{FNRW} we introduce the walk representation and give a definition of the dynamics in the general case for configurations of balls whose walk representation can be cut into infinitely many finite excursions.

In \S\ref{sec3} we discuss some conserved quantities of the dynamics, the identification of the solitons, a codification of the conserved quantities in terms of Young diagrams and define the \emph{slot diagrams} from \cite{fg18}.

In \S\ref{sec4} we recall the construction of the excursion tree and propose a new soliton decomposition of the excursion based on the tree. We introduce a branch decomposition of the tree and conclude that its slot diagram coincides with the one discussed in \S\ref{sec3}. The contents of this section are new.

In \S\ref{sec5}  we review results from \cite{fg18} related with the distribution of the soliton decomposition of excursions. In particular, the soliton decomposition of a simple random walk consists on independent double-infinite vectors of iid geometric random variables. We discuss also an application to branching processes.

\section{Preliminaries and notation}
\label{sec2}

\subsection{Box-Ball System}\label{bbs-sec}

The \emph{Box-Ball System} (BBS) \cite{TS} is a discrete-time cellular automaton. We start considering a finite number of balls evolving on the infinite lattice $\mathbb Z$. The elements of $\mathbb Z$ are called boxes.
A configuration of balls is codified by $\eta\in \{0,1\}^\mathbb Z$, that is, by a doubly infinite sequence of $1's$ and $0's$, corresponding respectively to the boxes occupied by balls and the empty boxes. Pictorially a ball will be denoted by $\bullet$ while an empty box by $\circ$.

There are several equivalent ways of defining the evolution. We denote by $T:\{0,1\}^{\mathbb Z}\to\{0,1\}^{\mathbb Z}$ the operator defining the evolution in one single step. This means that the configuration $\eta$ evolves in a single step into the configuration $T\eta$. In the following definitions we consider configurations having only a finite number of $1's$.

The equivalence among all the definitions is simple. The different definitions are however related to different classic combinatorial constructions and illustrate
the evolution from different perspectives.

\smallskip

\underline{First definition} We define the dynamics through a pairing between the balls  and some empty boxes.
Consider a ball configuration $\eta$ containing only a finite number of balls.
The evolution is defined iteratively.
At the first step we consider the balls that have an empty box in the nearest
neighbor lattice site to the right, that is, local configurations
of the type $\bullet\circ$ and we pair the two boxes drawing a line. Remove all the pairs created and continue following the same rule with the configuration obtained after the deletion of the
paired boxes. This procedure will stop after a finite number of iterations because there are only a finite number of balls.
See Fig.~\ref{cerchi}, where we assumed that there are no balls outside the window and the lines connect balls with the corresponding paired empty boxes.
The evolved configuration of balls, denoted $T\eta$
is obtained by
transporting every ball along the lines to the corresponding paired empty box.
Note that the lines pairing balls and empty boxes
can be drawn without intersections in the upper half plane.

\begin{figure}[h!]
	
	\centering
	
	\includegraphics[trim={0 10mm 0 19mm},clip]{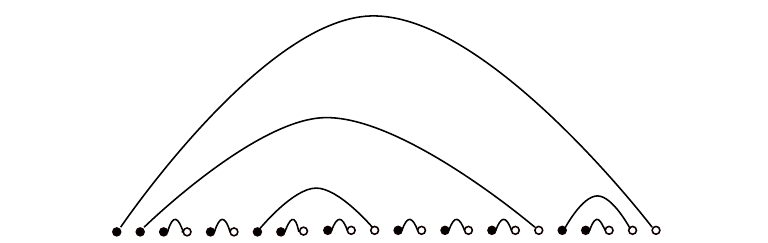}
	
	\caption{A finite configuration of balls with the corresponding pairing lines of the first definition.  }\label{cerchi}
	
\end{figure}
\smallskip

\underline{Second definition} \cite{TS}: This is the original definition of the model. Consider
an empty carrier that starts to the left of the leftmost ball and visit the boxes one after another moving from left to right. The carrier can transport an arbitrary large
number of balls.
When visiting box $i$, the carrier picks the ball if $\eta(i)=1$ and the number of balls transported by the carrier augments therefore by one and site $i$ is updated to be empty: $T\eta(i)=0$.  If instead $\eta(i)=0$ and the
carrier contains at least one ball then he deposits one ball in the box getting $T\eta(i)=1$. After visiting a finite number of boxes the carrier will be always empty and will not change any more the configuration, see Figure \ref{basket}. The final configuration $T\eta$ is the same as the one obtained by the previous construction.

\smallskip

\underline{Third definition}: Dyck words (after Walther von Dyck). Substitute any ball with an open parenthesis and any empty box with a closed one. The sequence of Fig.~1 becomes for example
$$
((()()(()())()()())(()))
$$
and outside this window there are only closed $)$ parenthesis. According to the usual algebraic rules we can pair any open parenthesis to the corresponding closed one. Recalling that open parenthesis correspond to balls, we move each ball from the
position of the open parenthesis to the position of the corresponding closed one.

\smallskip

\underline{Forth definition}: As a first step we duplicate each ball. After this operation on each occupied box there will be exactly 2 balls, one is the original one while the second is the clone. We select an arbitrary occupied box and move the cloned ball to the first empty box to the right. Then we select again arbitrarily another box containing two balls and do the same. We continue according to an arbitrary order up to when there are no more boxes containing more than one ball. At this point we remove the original balls and keep just the cloned ones. The configuration of balls that we obtain does not depend on the arbitrary order that we followed and coincides with $T\eta$.

\smallskip

\underline{Fifth definition}: Start from the leftmost ball and move it to the nearest empty box to its right. Then do the same with the second leftmost ball (according to the original order). Proceed in this way up to move once all the balls. This is a particular case of the fourth definition. It correspond to move the balls according to the order given by the initial position of the balls.

\smallskip

Our viewpoint will be to consider all the balls indistinguishable and from this perspective all the above definitions are equivalent. If we are instead interested in the motion of a tagged ball then  we can have different evolutions according to the different  definitions given above.

The construction can be naturally generalized to a class of configurations with infinitely many balls or to configuration of balls on a ring. This can be done under suitable assumptions on the configuration $\eta$ \cite{CKST,FNRW}. We will discuss briefly this issue following the approach of \cite{FNRW}, but
to do this we need some notation and definitions.

\subsection{Walk representation and excursions}\label{ex}

A function $\xi:\Z\to\Z$ satisfying $|\xi(i)-\xi(i-1)|=1$ is called \emph{walk}.
We map a ball configuration $\eta$ to a  walk $\xi=W\eta$
defined up to a global additive constant by
\begin{align}
\label{x11}
\xi(i)-\xi(i-1)=2\eta(i)-1
\end{align}
The constant is fixed for example by choosing $\xi(0)=0$. Essentially
the map between ball configurations and walks is fixed by the correspondence $\bullet\longleftrightarrow \diagup$ and $\circ\longleftrightarrow\diagdown$, where
$\bullet$ represents a ball, $\circ$ an empty box and $\diagup,\diagdown$ pieces of walk to be glued together continuously.  The map $W$ is invertible (when the additive constant is fixed) and the configuration of balls $\eta=W^{-1}\xi$ can be recovered using \eqref{x11}. We remark that there are several walks that are projected to the same configuration of balls and all of them differ by a global additive constant. This means that $W$ is a bijection only if the arbitrary additive constant is fixed and this will be always done in such a way that $\xi(0)=0$.

We call $i\in \mathbb Z$ a (minimum) \emph{record} for the walk $\xi$ if $\xi(i)<\xi(i')$ for any $i'<i$. The hitting time of $-j$ for the walk $\xi$ is a record denoted $r(j,\xi)$. We call \emph{excursion} of a walk the piece of trajectory between two successive records. A pictorial perspective on the decomposition of the walk into records and disjoint excursions is the following. Think the walk as a physical profile and imagine the sun is at the sunshine on the left so that the light is coming horizontally from the left.
The parts of the profile that are enlightened correspond
to the records while the disjoint parts in the shadow are the different excursions.

We call a \emph{finite walk} a finite trajectory of a random walk. More precisely a finite walk $\xi=(\xi(i))_{i\in \mathbb [0,k]}$, $k\in \mathbb N$, is an element of $\mathbb Z^{[0,k]}$ such that $|\xi(i)-\xi(i-1)|=1$. Again we always fix $\xi(0)=0$ and like before there is a bijection $W$ between finite walks and
finite configurations of balls, i.e. elements $\eta\in \{0,1\}^k$ for some $k\in \mathbb N$.
We use the same notation $\xi$ for finite and infinite walks and $\eta$ for finite and infinite configurations of balls. It will be clear from the context when the walk/configuration is finite or infinite.

We introduce the set $\mathcal E$ of \emph{finite soft excursions between records 0 and 1}.
An element $\vep\in \mathcal E$ is a finite walk that starts and ends at
zero, it is always non-negative and it has length $2n(\vep)$. More precisely
$\vep=\Big( \vep(0),\dots, \vep(2n(\vep))\Big)$ with the constraints $|\vep(i)-\vep(i-1)|=1$, $\vep(i)\ge 0$ and
$\vep(0)=\vep(2n(\vep))=0$.
The empty excursion $\emptyset$ is also an element of $\cE$ with $n(\emptyset)=0$.
We call $\mathcal E_n$ the set of soft finite excursions of length $2n$ so that
$\mathcal E=\cup_{n=0}^{+\infty}\mathcal E_n$.

Using the same correspondence as before between walks and configuration of balls
we can associate a finite configuration of balls $\big(\eta(1),\dots ,\eta(2n(\vep))\big)=W^{-1}\vep$ to the finite excursion $\vep$. If $\eta=W^{-1}\vep$, then we have
$\sum_{i=1}^{2n(\vep)}(2\eta(i)-1)=0$ but obviously not all configuration of balls satisfying this constraint generates a soft excursion by the transformation $W$.
It is well known \cite{C} that the number of excursions of length $2n$ is given by
\begin{equation}\label{catalan}
|\mathcal E_n|=\frac{1}{n+1}\binom{2n}{n}\,;
\end{equation}
the right hand side is the Catalan number $C_n$.

We denote by $\cE^o\subset \mathcal E$ the set of strict excursions. An element $\vep \in \mathcal E^o$ is an excursion that satisfies the strict inequality $\vep(i)>0$ when $i\neq 0,2n(\vep)$. Likewise we call $\mathcal E^o_n$ the strict excursions of length $2n$.

There is a simple bijection between $\mathcal E_n$ and $\mathcal E^o_{n+1}$. This is obtained by considering an element $\vep \in \mathcal E_n$ and adding a
$\diagup$ at the beginning and a $\diagdown$ at the end. The result is an element of $\mathcal E^o_{n+1}$. The converse map is obtained removing a $\diagup$ at the beginning and a $\diagdown$ at the end of an element of $\mathcal E^o_{n+1}$ obtaining an element of $\mathcal E_n$. This can be easily shown to be a bijection. In particular we deduce by \eqref{catalan} that $|\mathcal E^o_n|=\frac{1}{n}\binom{2(n-1)}{n-1}$.

\paragraph{Concatenating excursions}
Given a finite soft excursion $\vep$ we call $\tilde\vep$ the finite
 walk $\left(\tilde\vep(i)\right)_{i=0}^{2n(\vep)+1}$ such that $\tilde\vep(i)=\vep(i)$ when $0\leq i\leq 2n(\vep)$ and $\tilde\vep(2n(\vep)+1)=-1$.
 This corresponds essentially to add a $\diagdown$ at the end of the soft excursion.
 Given two such finite walks $\tilde\vep_0$ and
 $\tilde \vep_1$ we introduce their concatenation $\tilde\vep_0\star\tilde\vep_1$. This is a finite walk such that $\left[\tilde\vep_0\star\tilde\vep_1\right](i)=\tilde\vep_0(i)$ when $0\leq i\leq 2n(\vep_0)+1$
 and $\left[\tilde\vep_0\star\tilde\vep_1\right](i)=\tilde\vep_1(i-2n(\vep_0)-1)-1$ if
 $2n(\vep_0)+1< i\leq 2(n(\vep_0)+n(\vep_1))+2$. Essentially this operation
 corresponds to glue the graphs of the walks one after the other
 continuously. Iterating this operation we can define similarly
 also the concatenation of a finite number of finite walks $\tilde\vep_0\star\tilde\vep_2\star \dots \star \tilde\vep_k$. Likewise we
 consider an infinite walk $\left(\tilde\vep_i\right)_{i\in \mathbb Z}^{\star}$ obtained by a doubly infinite concatenation of finite walks. Informally this is obtained concatenating continuously the graphs as before with the condition that
 $\left(\tilde\vep_i\right)_{i\in \mathbb Z}^{\star}(j)=\tilde\vep_0(j)$ for
 $0\leq j\leq 2n(\vep_0)+1$.
\begin{figure}[h!]
	
	\centering
	
	\includegraphics{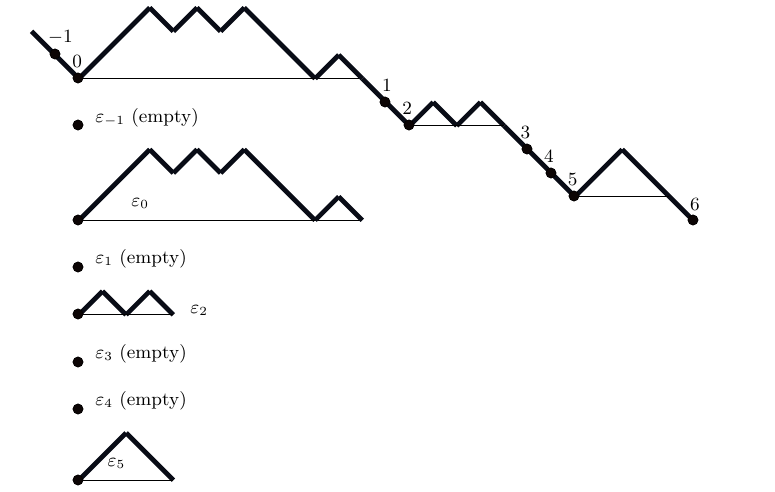}
	
	\caption{Up: walk representation of  ball configuration of next Fig.~\ref{cerchi-q}, with records represented by a black dot and labeled from $-1$ to 6. Down: excursion decomposition of the walk. }\label{forest-soliton}
	
      \end{figure}
Formally the walk $\xi$ is defined in terms of a family of excursions $(\vep_j)_{j\in\Z}$ as follows. First fix the position of the records of the walk $\xi$ iteratively by
\begin{align}
r(0,\xi)&:= 0,\nn\\
r(k+1,\xi)-r(k,\xi)&:=   2n(\vep_k)+1 \quad \hbox{for }k\in \Z\nn
\end{align}
so that the number of boxes between records $k$ and $k+1$ is the size of excursion $k$. Now complete the definition by inserting excursion $k$ between those records:
\begin{align}
\xi(r(k,\xi)+i) = -k+\vep_k(i), \quad i\in\{0,\dots,2n(\vep_k)\},\quad k\in\Z.
\end{align}
The resulting walk $\xi$ attains the level $-k$ for the first time at position $r(k,\xi)$. In particular, $\xi$ has infinite many records, one for each element of $\mathbb Z$. When this happens we say shortly that the walk has all the records.
Clearly a similar  concatenation procedure can be performed for any collection of finite walks and not just for excursions. We do not give the straightforward details.

Conversely, if we have a walk $\xi$ with all the records and such that record 0 is at 0 and record $k$ is at $r(k,\xi)$, then for each $k\in\Z$
we can define the excursion $\vep_k=\vep_k[\xi]$ by
\begin{align}
\vep_k(i) = \xi(r(k,\xi) +i)-(-k),\quad i\in\{0,1,\dots,r(k+1,\xi)-r(k,\xi)-1\}.
\end{align}
then we have that $\left(\tilde\vep_i\right)_{i\in \mathbb Z}^{\star}$ coincides with the original walk $\xi$.

We proved therefore that an infinite walk is obtained by an infinite concatenation of finite soft excursions separated by a $\diagdown$ if and only if it has all the records.

The set of configurations with density $a$ is defined by
\begin{equation}\label{banca}
\cX_a:=\left\{\eta\in\{0,1\}^\mathbb Z: \lim_{n\to \pm\infty}\frac{\sum_{j=0}^n\eta(j)}{n+1}=a\right\}\,, \qquad a\in [0,1]\,,
\end{equation}
and call $\cX:=\cup_{a<1/2}\cX_a$, the set of configurations with some density below $\frac12$. Consider $\eta\in \cX$ and let $\xi=W\eta$.
Since the walk $\xi$ is a nearest neighbor random walk with negative drift, it can assume any given value $k\in \mathbb Z$ only a finite number of times and therefore the walk will have all the records and hence we have that any element of $W\cX$ can be seen as a concatenation of infinitely many finite excursions.

The converse statement is however in general not true. It is possible to construct walks concatenating finite excursions that belong to $\cX_{1/2}$ or also such that the limits involved in the definition \eqref{banca} do not exist.

An example for the first case is a concatenation $\left(\tilde\vep_i\right)^*_{i\in \mathbb Z}$ where the walk $\tilde\vep_i$
is obtained adding an $\diagdown$ to the excursion $\vep_i$ that has length
$2^{|i|+1}$ and is composed by an alternating sequence of $\diagup$ and $\diagdown$.

An example for which the limits do not exist is when the excursion $\vep _i$ is formed by  a sequence of $2^{|i|}$ pieces of the type $\diagup$ followed by the same numbers of $\diagdown$.




\subsection{BBS with infinitely many balls and on the ring}
\label{s23}

We can now generalize the definition of the dynamics to infinite configurations of balls. This can be done in a natural way under suitable assumptions. In particular the dynamics can be defined for configuration of balls  whose corresponding walk has all the records.
We have already shown that in this case the walk is a suitable horizontal translation of the concatenation $\left(\tilde\vep_i\right)^*_{i\in \mathbb Z}$  of infinite many finite excursions with a $\diagdown$ appended at the end.
We define also the action of the operator $T$ on  configurations of balls on a ring with $N$ sites containing $k\leq \frac{N}{2}$ balls. In both cases the basic idea is that we can define the action of the evolution operator $T$ on each single excursion of the decomposition of an associated walk.

We discuss this issue using the first definition of the dynamics. Similar arguments can be given also for the other definitions. The basic fact is that, when the walk
of an infinite configuration of balls has all the records then in the pairing procedure all the lines joining balls and empty boxes can be constructed locally. More precisely drawing a vertical line going through a record $r(k,\eta)\in \mathbb Z$ we have that there are no lines
of the construction that cross this vertical line. All the balls belonging to an excursion are paired to empty boxes belonging to the same excursion.

Therefore, if the walk representation $W\eta$ of an infinite configuration of balls $\eta$ is the concatenation of infinitely many finite excursions separated by records, then $T\eta$ can be naturally defined, using the first definition. More precisely the operative definition of $T$ is the following. Consider an excursion of the walk and consider the balls that are in the corresponding lattice sites. Erase all the other balls of the configuration. In this way we obtain a configuration with a finite number of balls and we can apply the original first definition. All the balls will be paired with boxes belonging to lattice sites of the excursion. We do this for all the excursions of the walk. In this way we obtain the configuration $T\eta$. There are no overlaps since all the constructions stay inside the disjoint excursions of $W\eta$.

The example of Fig.~\ref{cerchi} corresponds to a configuration of balls having one single non empty excursion. The example of Fig.~\ref{cerchi-q} corresponds instead to a configuration having 3 non empty excursions that are surrounded by rectangles. The lines constructed for any excursion
are naturally divided into blocks. These blocks correspond exactly to the natural
subdivision of any excursion into the concatenation of strict excursions. Each block has a maximal line surrounding all the others and there are no other lines surrounding the maximal ones. This means that balls and empty boxes corresponding to a strict excursion are paired among themselves and therefore the evolution of the balls of each strict excursion is determined independently of what happens outside. In Fig.~\ref{cerchi-q} the leftmost excursion is the concatenation of 2 strict excursions and correspondingly there are 2 maximal lines inside the rectangle. The same happens to the central excursion while the rightmost excursion has only one maximal line so that the excursion is strict.

We observe  (see \cite{CKST} for more details) that there are configurations $\eta$ such that $T\eta$ is well defined but  $T(T\eta)$ is not. For simplicity we use a configuration $\eta$ that is built up by the concatenation of infinite strict excursions, but we could as well start from a configuration with infinitely many excursions separated by records. The configuration $\eta$ that we consider is $\eta=(\vep_i)_{i\in \mathbb Z}^*$ (the definition of this concatenation is straightforwardly similar to the one given for excursions with a $\diagdown$ appended at the end) where the excursions $\vep_i$ with $i\geq 0$ are all obtained by ball configurations of the form
$\bullet\circ$ while the excursion $\vep_i$ with $i<0$ is of the form $$\overbrace{\bullet\bullet\dots\bullet }^{|i|\ \textrm{balls}}\overbrace{\circ\circ\dots \circ}^{|i|\ \textrm{empty boxes}}$$
By our previous arguments it is possible to implement the transformation $T$ since we can operate separately on each strict excursion. As the reader can easily see it is not possible to define $T(T\eta)$. This is because the configuration $T\eta$ has no records and hence cannot be divided into finite disjoint excursions.

It is important to see that if $\eta\in \cX$ then we can define $T^k\eta$ for any $k$. This is because it can be easily shown that $T$ maps elements of $\cX$ to elements of $\cX$. The example just illustrated does not indeed belong to $\cX$.

A similar discussion can be done using the other definitions in \S\ref{bbs-sec}. Let us consider the second definition. In the case of a finite number of balls we imagine the carrier starting empty just on the left of the first ball. In the case of infinitely many balls we can consider however the carrier starting empty in correspondence of a record and moving to the right. The carrier is performing a transformation on the configuration of balls corresponding to the first excursion that he meets. After this he will reach a new record box and correspondingly he will be again empty. Then the carrier can proceed afresh to the second excursion and so on. This means that equivalently the transformation $T$ can be performed by infinitely many carriers, one for each finite excursion. They start empty to the left of the excursion and end empty at the right of the excursion. The evolved configuration $T\eta$ can therefore  be computed locally restricting to each single excursion.
\begin{figure}[h!]
	
	\centering
	
	\includegraphics[trim={0 10mm 0 35mm},clip]{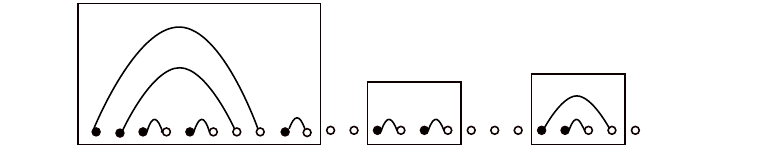}
	
	\caption{An example of a pairing between balls and empty boxes
		containing several maximal lines. The 3 different excursions are surrounded
		by rectangles.  Maximal lines correspond to strict excursions. }\label{cerchi-q}
	
\end{figure}

The definition of the dynamics on a ring can be done simply associating to each configuration on the ring an infinite periodic configuration on $\mathbb Z$. When  the number of balls is strictly less than $N/2$ we have that the corresponding walk has all the records and we can perform the construction as discussed above on each excursion independently. The evolved configuration is again periodic and can be interpreted as a ball configuration on the ring.
This fact does not hold in the case that the number of balls is exactly $N/2$ since in this case the infinite associated walk will have no records. However the dynamics in this case consists simply in flipping  the value of each box. Empty boxes becomes full while full boxes becomes empty.

\smallskip

\underline{Sixth definition}: Using the walk representation of a configuration of balls it is possible to give another equivalent definition of the Box-Ball dynamics. Since we know that the evolution operator $T$ acts independently on each excursion let us consider a configuration of balls $\eta$ on $\mathbb Z$ having just a finite number of balls  and such that the corresponding walk has one single excursion. The updating rule of the evolution $T$ corresponds in flipping the graph of the excursion like in Fig.~\ref{T-6-3}. When there are more than one single excursion the same symmetry operation has to be done on each single excursion. The configuration $T\eta$ is recovered applying $W^{-1}$ to the new walk obtained. This dynamics was already proposed by Le Gall \cite{MR942038}.

\begin{figure}[h!]
	
	\centering
	
	\includegraphics[trim={0 13mm 0 10mm},clip] {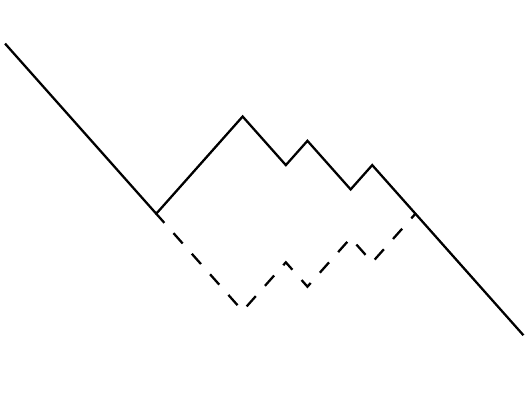}
	
	\caption{The walk of a configuration with finite many balls and having one single excursion. The action of $T$ consists in flipping the graph of the part of the walk corresponding to the excursion. After the action of $T$ the graph of the excursion has to be substituted by the dashed part.}\label{T-6-3}
	
\end{figure}

\underline{Seventh definition}: Here we write in formulas the construction done in the above definition. These formulas apply directly to infinite configurations of balls having all the records. The first simple and general formula that summarize the evolution is
\begin{equation}
T\eta(x)=\left\{
\begin{array}{ll}
0 & \textrm{if}\ x\ \textrm{is\ a \ record\ of}\ W\eta\\
1-\eta(x) & \textrm{otherwise}\,.
\end{array}
\right.
\end{equation}
The second formula is the following. For a walk $\xi$ having all the records, the
curve $\min_{y\leq x}\xi(y)$ is well defined. The operator $T$ essentially reflects the walk $\xi$ with respect to this curve. We have
\begin{equation}
T\xi(x)=\left[\min_{y\leq x}\xi(y)\right]-\left[\xi(x)-\min_{y\leq x}\xi(y)\right]\,,
\end{equation}
where we denote by $T\xi$ the walk corresponding to $T\eta$ when $\xi=W\eta$, i.e.
$T\xi:=WT\eta$.

\section{Conserved quantities and solitons}
\label{sec3}

In this section we discuss how to identify the solitons that are traveling through
the system. We obtain different combinatorial structures and discuss the relationship among them. Solitons are  conserved quantities of the system.

\subsection{Runs}
\label{sdfr}

Given a configuration of balls $\eta$, a \emph{zero-run} is a maximal integer interval of empty boxes and a \emph{one-run} is a maximal integer interval of boxes occupied by balls; we call run the union of these to sets. The runs of $\eta$ form a partition of the lattice $\mathbb Z$. In statistical mechanics a run is usually called a cluster.

More precisely the finite interval $[x,y]\subseteq \mathbb Z$ is a run
if $\eta(z_1)=\eta(z_2)$ for any $z_1,z_2\in [x,y]$ and moreover
$\eta(x-1)\neq \eta(x)$ and $\eta(y+1)\neq \eta(y)$. A run can be
a zero run or a one run depending if the boxes are
respectively empty or occupied. A run can be also semi-infinite or infinite. We can have therefore runs of the form $(-\infty,x]$
or $[x,+\infty)$ or even $(-\infty,+\infty)$. In the first case we have $\eta(z_1)=\eta(z_2)$ for any $z_1,z_2\leq x$ and $\eta(x+1)\neq \eta(x)$, in the second case we have $\eta(z_1)=\eta(z_2)$ for any $z_1,z_2\geq x$ and $\eta(x-1)\neq \eta(x)$ while in the last case we have $\eta(z_1)=\eta(z_2)$ for any $z_1,z_2$.

Given the run $[x,y]$ we call $|x-y|$ its size. The size of the semi-infinite or infinite runs is $+\infty$.

Any configuration of balls generates a partition of $\mathbb Z$
into disjoint runs alternating between zero-runs and one-runs.
A configuration containing a finite nonzero number of balls induces a finite collection of runs being the leftmost and the rightmost semi-infinite zero-runs. The configuration with no balls has only a double infinite zero-run. In this situation it is possible to implement the following algorithm.

\subsection{Takahashi-Satsuma soliton decomposition}
\label{secTSd}
The following is a small variation of the original algorithm.
\begin{itemize}
	\item[0)] Start with a ball configuration $\eta$ with a finite number of balls.
	\item[1)] If there is just one single infinite zero-run then stop, otherwise go to the next step.
	
	\item[2)]  Search for the leftmost among the smallest runs; assume that the run size is $k$. The $k$ boxes belonging to the run  and the first $k$ boxes belonging to the nearest neighbor run to its right (whose size is necessarily not smaller than $k$) identify a soliton $\gamma$, call $\tth_1(\gamma)<\dots<\tth_k(\gamma)$ the positions of the occupied boxes of $\gamma$ and $\ttt_1(\gamma) <\dots <\ttt_k(\gamma)$ the positions of the empty boxes of $\gamma$.
	
	\item[3)] Ignore the boxes of the identified solitons, update the runs gluing together the remaining boxes and go to step 1.
	
\end{itemize}
Since there is a finite number of balls, the algorithm stops after a finite number of iterations and identifies a finite number of solitons. This algorithm is called TS decomposition. For instance, the TS decomposition of the excursion of Fig.~\ref{excursion} is given in Fig.\ref{TS-decomposition}.

The soliton decomposition of a ball configuration with infinitely many balls and infinitely many records is done performing the above algorithm on each single excursion.
\begin{figure}[h!]

\centering

\includegraphics{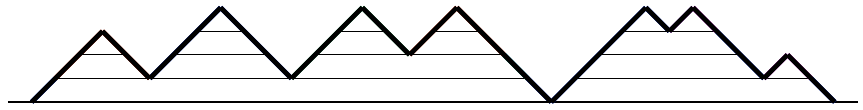}

\caption{An excursion.}\label{excursion}

\end{figure}
\begin{figure}[h!]

\centering

\includegraphics{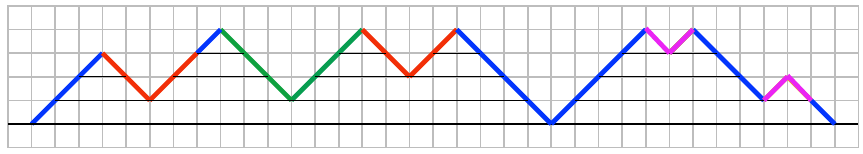}

\caption{TS-decomposition of the excursion in Fig.~\ref{excursion}. First iteration: there are 3 smallest runs of size 1. Take the leftmost one and its next box (which turns out to be also a smallest run) to identify a 1-soliton; pretend that the identified boxes are not present and update the runs accordingly. Second iteration: there is now one smallest run of size 1, identify the second 1-soliton. In the 3rd and 4th iterations the smallest runs are of size 2; In the 5th iteration the 2 smallest runs are of size 3; for the 6th and 7th iteration the smallest runs are of size 4. We have identified 2 1-solitons (violet), 2 2-solitons (red), 1 3-soliton (green) and 2 4-solitons (blue).
}\label{TS-decomposition}

\end{figure}%
Each soliton $\gamma$ is composed by two disjoint sets of the same cardinality, the set of occupied boxes $\tth(\gamma)$, called the \emph{head} and the set of empty boxes $\ttt(\gamma)$ called the \emph{tail}. They satisfy $\gamma=\ttt(\gamma)\dot\cup \tth(\gamma)$. If $|\ttt(\gamma)|=|\tth(\gamma)|=k$ we call $\gamma$ a $k$-soliton and write $\ttt(\gamma)=(\ttt_1(\gamma),\dots ,\ttt_k(\gamma))$ with $\ttt_i(\gamma)<\ttt_{i+1}(\gamma)$ and $\tth(\gamma)=(\tth_1(\gamma),\dots ,\tth_k(\gamma))$ with $\tth_i(\gamma)<\tth_{i+1}(\gamma)$.
Note that either $\tth_i(\gamma)<\ttt_j(\gamma)$ for any $i,j$ or $\tth_i(\gamma)>\ttt_j(\gamma)$ for any $i,j$ and that the walk has the same height at $\tth_i(\gamma)$ and $\ttt_i(\gamma)$: $\xi(\tth_i(\gamma))=\xi(\ttt_i(\gamma))$, for this reason we say that the head and tail of a soliton are \emph{paired}.

We have the following key definition.
\begin{definition}\label{defslot}
We say that a box $i$ is a \emph{$k$-slot} if either $i$ is a record or $i$ belongs to $\{\ttt_\ell(\gamma),\tth_\ell(\gamma)\}$ for some $\ell> k$ for some $m$-soliton $\gamma$ with $m>k$.
\end{definition}
The set of $k$-slots contains the set of $m$-slots for all $m>k$. We illustrate in Fig.~\ref{TS-slot} the slots induced by the soliton decomposition of the excursion in Fig.~\ref{TS-decomposition},
\begin{figure}[h!]

\centering

\includegraphics{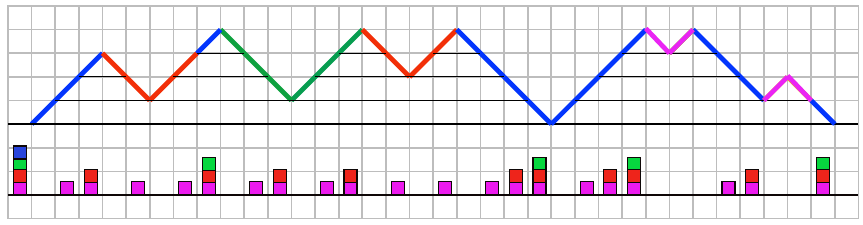}

\caption{Slots induced by the TS-decomposition of the excursion in Fig.~\ref{TS-decomposition}. Violet, red, green and blue squares are respectively 1-, 2-, 3- and 4-slots, respectively. In the extreme left, the record preceding the excursion is $k$-slot for all $k\ge1$.}\label{TS-slot}

\end{figure}
obtaining one 4-slot (at the record), 5 3-slots, 11 2-slots and 21 1-slots. According to definition \ref{defslot}, the number of $k$-slots is $s_k:=1+\sum_{\ell>k} 2(\ell-k)n_k$, where $n_k$ is the number of $k$-solitons in the excursion. The $1$ in the above formula corresponds to the record on the left that is a slot of any order.

For each $k$ we enumerate the $k$-slots in the excursion starting with 0 for the $k$-slot in the record preceding the excursion, We say that a $k$-soliton $\gamma$ is \emph{attached} to the $k$-slot number $i$ if the boxes occupied by $\gamma$ are contained in the segment with extremes the $i$th and $(i+1)$th $k$-slots in the excursion. We define
\begin{align}
  \label{xki}
  x_k(i) := \text{\#\{$k$-solitons attached to $k$-slot number $i$\}}.
\end{align}
\subsection{Slot diagrams}
We define a combinatorial family of objects called \emph{slot diagrams} that according to \cite{FNRW} is in bijection with $\mathcal E$, see also \S\ref{sec4} below.

A generic slot diagram is denoted by $x=\left(x_{k}: 1\leq k\leq M\right)$, where
$M=M(x)$ is a non negative integer,
$$
x_{k}=\left(x_{k}(0), \dots ,x_{k}(s_k-1)\right)\in \mathbb N^{s_k}
$$
and $s_k$ is a non-negative integer. We say that $x_k(j)$ is the number of $k$-solitons attached to the $k$-slot number $j$.
We denote $n_{k}:=\sum_{i=0}^{s_k-1}x_k(i)$, the number of $k$-solitons in $x$. A precise definition is the following.

We say that $x$ is a \emph{slot diagram} if
\begin{itemize}

\item There exists a non negative integer number $M=M(x)$ such that
$s_M=1$ and $x_m(0)=0$  for $m>M$. Hence, we can ignore soliton sizes above $M$ and denote $x=\left(x_{k}: 1\leq k\leq M\right)$.

\item For any $k$, the  number of $k$ slots $s_k$ is determined by $(x_\ell:\ell>k)$ via the formula
\begin{equation}\label{sk}
s_k=s_k(x)=1+2\sum_{i=k+1}^{M}(i-k)n_{i}\,.
\end{equation}
\end{itemize}
Consider now the soliton decomposition of an excursion and the corresponding vectors defined by \eqref{xki} and define $M= \min\{k\ge 0: x_{k'}(0) = 0$ for all $k'>k\}$. Then, the family of vectors $(x_k: k\le M)$ forms a slot diagram; if $M=0$ the slot diagram is \emph{empty} and corresponds to an empty excursion. The slot diagram of the excursion in Fig.\ref{TS-slot} is as follows. We have $M=4$ and
\begin{align}
x_4&=(2), \quad s_4=1;\nonumber\\
x_3&= (0,1,0,0,0), \quad s_3=5;\nonumber\\
x_2&=(0,1,0,0,1,0,0,0,0,0,0),\quad s_2=11;\nonumber\\
x_1&= (0,0,0,0,0,0,0,0,0,0,0,0,0,0,0,0,0,1,0,1,0),\quad s_4=21.\nonumber
\end{align}
For example the vector $x_3$ has just $x_3(1)=1\neq 0$ since there is just one 3-soliton and its support (the boxes corresponding to the green part of the walk in Fig.~\ref{TS-slot}) is contained between the 3-slots number 1 and number 2.
Recall that the $k$-slot located at the record to the left of the excursions is numbered 0 for all $k$ and then the 3-slot number 1 is the second green box from the left in Fig.~\ref{TS-slot}.

\subsection{Head-Tail soliton decomposition} We propose another decomposition, called HT soliton decomposition.
\begin{itemize}
\item[0)] Start with a ball configuration $\eta$ with a single excursion.
\item[1)] If there is just one single (infinite) run then stop, otherwise go to the next step.
\item[2)]  Search for the leftmost among the smallest runs. If the run contains 1's, then pair the boxes belonging to the run with the first boxes (with zeroes) belonging to the nearest neighbor run to its right. If the run contains 0's, then pair the boxes with the nearest boxes (with ones) to the left of the run. The set of paired boxes and their contents identifies a soliton $\gamma$.
\item[3)] Ignore the boxes of the identified solitons, update the runs gluing together the remaining boxes and go to step 1.
\end{itemize}
The HT soliton decomposition of the excursion in Fig.\ref{excursion} is given in Fig.\ref{14-heta}. The name of the decomposition comes from the fact that the head of each soliton is to the left of its tail in all cases. We will denote soliton$^{\diamond}$ those  solitons identified  by the HT decomposition. We will see that this decomposition arises naturally in terms of a tree associated to the excursion.
\begin{figure}[h!]
	
	\centering
	
	\includegraphics{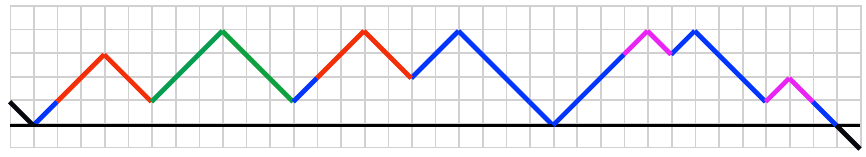}
	
	\caption{The HT soliton decomposition of the excursion in Fig.\ref{excursion}.}\label{14-heta}
	
\end{figure}

We say that a box $i$ is a $k$-slot$^{\diamond}$ if either $i$ is a record or $i\in\{\tth_\ell(\gamma^{\diamond}),\ttt_{m-\ell+1}(\gamma^{\diamond})\}$ for some $\ell\in\{1,m-k\}$ for some $m$-soliton$^{\diamond}$ $\gamma^{\diamond}$ for some $m>k$; for example, if $\gamma^{\diamond}$ is a 4-soliton$^{\diamond}$, $\tth_1(\gamma^{\diamond})$ and $\ttt_4(\gamma^{\diamond})$ are 3-slots$^{\diamond}$. See the upper part of Fig.\ref{16-ht+ts}. Observe that, as before, the set of $k$-slots$^{\diamond}$ is contained in the set of $\ell$-slots$^{\diamond}$ for any $\ell<k$.
 \begin{figure}[h!]
	\centering
	\includegraphics[width= \textwidth] {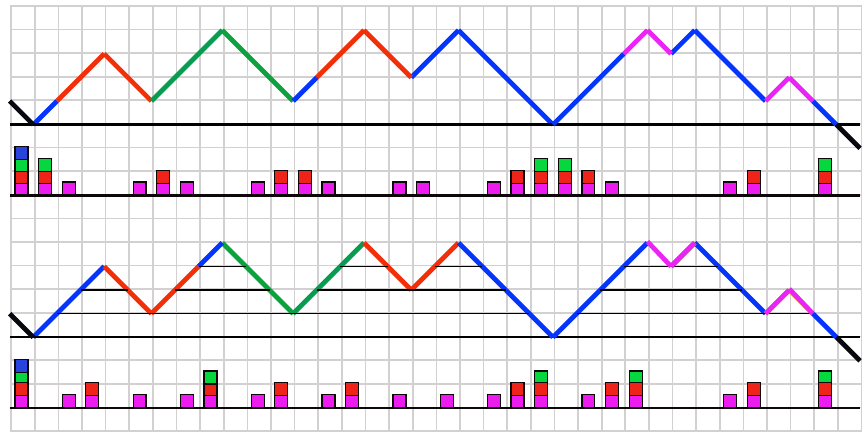}
	\caption{Comparing the HT decomposition (above) with the Takahashi-Satsuma decomposition (below). Colored squares indicate that the corresponding box is a $k$-slot$^{\diamond}$ (or a $k$-slot) with 1 = violet, 2 = red, 3 = green and 4 = blue. The number of $k$-slots belonging to an $\ell$-soliton is $2(\ell-k)$ in both cases, but the localizations inside the solitons are different. The leftmost box corresponds to the record the excursion is associated with. The slot diagrams of both decompositions coincide; for instance there is a 3-soliton attached to 3-slot number 1 in both pictures.}\label{16-ht+ts}
      \end{figure}
      As before we say that a $k$-soliton$^{\diamond}$ is attached to $k$-slot$^{\diamond}$ number $i$ if the boxes of the soliton$^{\diamond}$ are strictly between $k$-slots$^{\diamond}$ $i$ and $i+1$.
      If we enumerate the $k$-slots$^{\diamond}$ of the excursion starting with 0 for the $k$-slot$^{\diamond}$ at record 0, we can again define $x_k^{\diamond}(i):=$ number of $k$-solitons$^{\diamond}$ attached to $k$-slot$^{\diamond}$ number $i$. This produces a slot diagram $x^{\diamond}$ associated to the excursion. We denote $x^{\diamond}[\vep]$ the slot diagram produced by the HT soliton decomposition of the excursion $\vep$.

      See Fig.\ref{16-ht+ts} for a comparison of the slots  induced by the HT soliton decomposition and the TS soliton decomposition.

      The next result says that the slot diagrams produced by both decompositions are identical. Observe that a slot diagram gives information about the number of solitons and about their combinatorial arrangement so that codifies completely the corresponding excursion.


\begin{theorem}
  \label{ts=td}
  The slot diagram of the Head-Tail decomposition to an excursion $\vep\in\cE$ coincides with the slot diagram of the Takahashi-Satsuma  decomposition of $\vep$. That is, $x[\vep]=x^{\diamond}[\vep]$.
\end{theorem}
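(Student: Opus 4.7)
The plan is to proceed by strong induction on $n(\vep)$, the number of balls in $\vep$. The base case $n(\vep)=0$ is immediate since both decompositions produce the empty slot diagram. For the inductive step, let $\beta$ denote the leftmost run of minimum size $k\ge 1$ in $\vep$. Both algorithms identify a $k$-soliton at the first step; call them $\gamma$ for TS and $\gamma^{\diamond}$ for HT.

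The first task is the following \emph{Key Lemma}: the residual excursion obtained after removing $\gamma$ (resp.\ $\gamma^{\diamond}$) and gluing is the same abstract excursion. When $\beta$ is a 1-run, both algorithms remove the same physical boxes and the statement is trivial. When $\beta=[x,x+k-1]$ is a 0-run, its two flanking 1-runs have sizes $a,b\ge k$ by minimality of $\beta$: TS removes $\beta$ together with the leftmost $k$ ones of the right flanking 1-run, while HT removes the rightmost $k$ ones of the left flanking 1-run together with $\beta$. In both cases the unremoved portions of the two flanking 1-runs glue into a single block of $a+b-k$ consecutive ones, and the remainder of $\vep$ is untouched, so the two residual abstract excursions coincide. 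By the inductive hypothesis, the TS and HT decompositions produce the same slot diagram on this common residual.

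The main obstacle is the subsequent lifting step. In the 0-run case $\gamma$ occupies $[x,x+2k-1]$ whereas $\gamma^{\diamond}$ occupies $[x-k,x+k-1]$, so although the residual excursion is the same abstractly, it is embedded in $\vep$ via different physical positions in TS and HT. Consequently every subsequently identified soliton and every slot it contributes can sit at different positions in $\vep$, and the slot attachment of $\gamma$ itself may a priori differ. The plan to address this is to prove, alongside the induction, the invariant that for every soliton $\delta$ appearing in the decomposition of the residual excursion and every relevant slot size $k'$, the number of $k'$-slots of $\vep$ strictly to the left of the leftmost physical box of $\delta$ is the same in TS and HT; an analogous statement is shown for $\gamma$ itself, using that the $k$-slot list of $\vep$ comes entirely from solitons of size $>k$, all of which live in the residual. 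Since this count equals the slot number to which the soliton is attached, the slot diagrams of $\vep$ in the two decompositions coincide.

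The invariant is verified by direct bookkeeping and is made tractable by two observations. First, the tail-side $k'$-slots contributed by $\gamma$ and $\gamma^{\diamond}$ occupy the \emph{same} physical positions $[x+k',x+k-1]$ in both decompositions (the rightmost $k-k'$ zeros of $\beta$). Second, the head-side $k'$-slots differ in position, namely $[x+k+k',x+2k-1]$ for TS versus $[x-k,x-k'-1]$ for HT, but a short case analysis on the abstract location of the leftmost box of $\delta$ within the merged 1-run of length $a+b-k$---split into a left part of length $a-k$, a discrepancy zone of width $k$ on which TS and HT assign opposite-side physical positions, and a right part of length $b-k$---shows that the same number of head-side slots of $\gamma$ fall strictly to the left of $\delta$'s leftmost physical box in both decompositions. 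Combined with the induction, this yields $x[\vep]=x^{\diamond}[\vep]$.
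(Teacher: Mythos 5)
Your overall architecture is genuinely different from the paper's: you peel off the smallest soliton first and induct on the number of balls, whereas the paper builds the excursion by attaching solitons from the largest size downward, carrying along the positional relation \eqref{20s} between $\tts_k(i)$ and $\ttts_k(i)$. Your Key Lemma is correct and is a nice observation: in the $0$-run case TS consumes $\beta$ plus the leftmost $k$ ones of the right flank while HT consumes the rightmost $k$ ones of the left flank plus $\beta$, and both leave the same abstract residual, so both decompositions produce the same multiset of soliton sizes and, by induction, matching slot diagrams on the residual. Moreover, for every soliton $\delta$ of the residual the counting invariant you state is actually automatic: a $k''$-soliton with $k''\ge k$ attaches to a $k''$-slot, and $k''$-slots are generated only by solitons of size $>k''$, hence only by solitons living in the residual; since the two embeddings of the residual into $\vep$ are order-preserving, the slot number of $\iota(\delta)$ in $\vep$ equals its slot number in the residual, and these agree by the inductive hypothesis. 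No bookkeeping is needed there.

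The genuine gap is in the one case that is not automatic, namely the slot number of $\gamma$ itself, and your third paragraph does not address it: the slots contributed by $\gamma$ all have size strictly less than $k$, and since $k$ is the minimal soliton size of $\vep$ no soliton is ever attached to, or counted against, such a slot — so the head-side/tail-side analysis of $\gamma$'s own slots is beside the point. What you actually need is to compare the number of $k$-slots of $\vep$ strictly to the left of $x$ (TS) with the number of $k$-slots$^{\diamond}$ strictly to the left of $x-k$ (HT). These slots are generated by the \emph{larger} solitons of the residual, and the corresponding TS and HT solitons of the residual occupy different boxes, so the $i$-th $k$-slot and the $i$-th $k$-slot$^{\diamond}$ sit at different positions (they differ by $k$ when the slot lies in a head and by $0$ when it lies in a tail — exactly the paper's relation \eqref{20s}). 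The equality of the residual's slot diagrams, which is all your inductive hypothesis provides, carries no such positional information, so the count for $\gamma$ cannot be extracted from it; your phrase ``an analogous statement is shown for $\gamma$ itself'' is precisely the missing step. Relatedly, your case analysis treats ``the leftmost box of $\delta$'' as a single well-defined position in the merged run, but the TS and HT versions of a corresponding soliton generally have different leftmost boxes (e.g.\ for $\bullet\bullet\circ\bullet\circ\circ$ the first identified $1$-soliton occupies boxes $\{3,4\}$ in TS and $\{2,3\}$ in HT). To close the argument you would have to strengthen the induction hypothesis to a positional statement equivalent to \eqref{20s}, at which point your proof essentially merges with the paper's.
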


\begin{proof} Let $\vep$ be an excursion and denote $x[\vep]$ and $x^{\diamond}[\vep]$ the TS and HT slot diagrams of $\vep$, respectively; let $m$ and $m^{\diamond}$ be the maximal soliton size in each representation. Let $\tts_k(i)$ and $\ttts_k(i)$ be the position of the $i$-th $k$-slot in the TS and HT decompositions of $\vep$, respectively.

  Assume $\vep$ has neither $\ell$-solitons nor $\ell$-solitons$^{\diamond}$ for all $\ell\le k$. Then, $\tts_k(0)=\ttts_k(0)=0$ and for $0<i<s_k$, we will show that
\begin{align}
  \label{20s}
    \tts_k(i) =
    \begin{cases}
      \ttts_k(i) + k, &\text{if $\tts_k(i)$ belongs to the head of a soliton$^{\diamond}$};\\
   \ttts_k(i),&\text{if  $\tts_k(i)$ belongs to the tail of a soliton$^{\diamond}$}.
    \end{cases}
\end{align}
which implies the theorem.
We prove \eqref{20s} by induction. If $\vep$ has only $m$-solitons, then \eqref{20s} holds for any $k<m$ by definition. Assume \eqref{20s} holds if  $\vep$ is an excursion with no $\ell$-solitons for $\ell\le k$. Now attach a $k$-soliton$^{\diamond}$ $\gamma^{\diamond}$ to $\ttts_k(i)$ and a $k$-soliton $\gamma$ to $\tts_k(i)$.

We have 2 cases:

(1) $s_k(i)$ is the record or belongs to the tail of a $m$-soliton $\alpha$ with $m$ bigger than $k$. In this case also $s_k^{\diamond}(i)$ belongs to the record or to the tail of a $m$-soliton$^{\diamond}$ $\alpha^{\diamond}$ and $\gamma$ is attached to the same place as $\gamma^{\diamond}$, hence it does not affect the distances between $\ell$-slots and $\ell$-slots$^{\diamond}$ in the excursion --indeed, they coincide in the record and in the tail of $\alpha$ and $\alpha^{\diamond}$-- for $\ell\le k$. On the other hand, the $\ell$-slots carried by $\gamma$ and the $\ell$-slots$^{\diamond}$ carried by $\gamma^{\diamond}$ satisfy \eqref{20s}.

(2) $s_k(i)$ is in the head of $\alpha$. In this case necessarily $s^{\diamond}_k(i)$ is in the head of $\alpha^{\diamond}$ by inductive hypothesis and $s_k(i)= s^{\diamond}_k(i)+k$. We consider 2 cases now:

(2a) $k$-slots. The attachments of $\gamma$ to $s_k(i)$ and  $\gamma^{\diamond}$ to $s_k^{\diamond}(i)$ does not change the distance between $k$-slots and $k$-slots$^{\diamond}$ because either $s_k(j)< s_k(i)$ and $s^{\diamond}_k(j)< s^{\diamond}_k(i)$ and in this case the insertions do not change their positions or otherwise both slots are translated by $2k$, the number of boxes occupied by the $k$-solitons. We conclude that \eqref{20s} is satisfied by $k$-slots and $k$-slots$^{\diamond}$ after the attachments.
 \begin{figure}[h!]
	\centering
	\includegraphics[width= .8\textwidth] {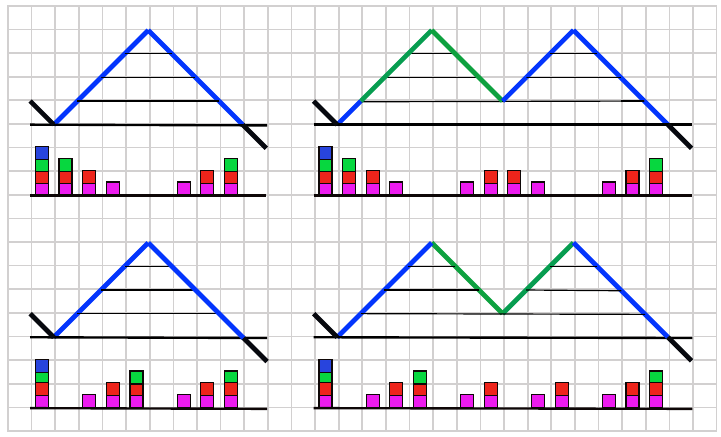}
	\caption{Above: Attach a $3$-soliton$^{\diamond}$ $\gamma^{\diamond}$ in $3$-slot$^{\diamond}$ number 1. Below: attach a 3-soliton $\gamma$ in 3-slot number 1. The excursions after the attachments coincide and the slot$^{\diamond}$ diagram after the attachments coincide with the slot diagram and is given by  $m=4$, $x_4=(1)$, $x_3=(0,1,0)$, $x_2$ is a vector with 7 zeroes and $x_1$ a vector with 11 zeroes. The soliton decompositions of the excursion obtained after the attachments satisfy \eqref{20s}.}\label{18-b}
      \end{figure}

      (2b) $\ell$-slots for $\ell<k$. Take an $\ell<k$ and an $\ell$-slot $s_\ell(j)$ in the head of $\alpha$. If $s^{\diamond}_\ell(j)<s^{\diamond}_k(i)$ and $s_\ell(j)<s_k(i)$, neither will be displaced, so \eqref{20s} is satisfied for $\ell$-slots to the left of $s_k(i)$. On the other hand, if $s^{\diamond}_\ell(j)> s^{\diamond}_k(i)$, then  $s^{\diamond}_\ell(j)$ keeps its place after the attachment of $\gamma^{\diamond}$ and $s_\ell(j)$ is to the left of the attachment, hence they satisfy \eqref{20s} after the attachments (this is the case of the 4th violet $1$-slot and $1$-slot$^{\diamond}$).

We have proved that if the slot and slot$^{\diamond}$ diagrams of an excursion with no $\ell$-solitons for $\ell\le k$ coincide, then they coincide after attaching $k$-solitons and $k$-solitons$^{\diamond}$.
\end{proof}

\subsection{Attaching solitons}
\label{appsoli}

In the previous subsection we discussed the decomposition of a configuration into
elementary solitons/solitons$^{\diamond}$ and how to codify each single excursion using a slot
diagram that takes care of the combinatorial arrangement of the solitons/solitons$^{\diamond}$
into the available slots/slots$^{\diamond}$. In this Section we discuss the reverse construction.
Given a slot diagram we illustrate how to construct the corresponding excursion.
The procedure is particularly simple and natural in the case of the HT decomposition.

\begin{figure}[h!]
	
	\centering
	
	\includegraphics{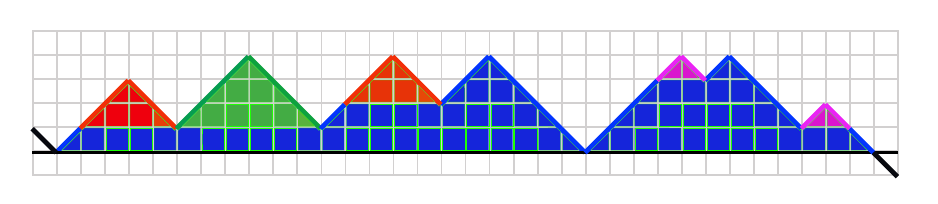}
	
	\caption{The epigraph of an excursion divided into different colored regions obtained by drawing horizontal lines from the leftmost point on the graph of the excursion associated to a given soliton$^{\diamond}$ to the rightmost in the
		HT decomposition. }\label{14-heta-dav}
	
\end{figure}
The basic idea is illustrated in Fig.~\ref{14-heta-dav}, which was obtained from Figure
\ref{14-heta} by drawing horizontal lines from the leftmost point in the graph of the excursion associated to the head to the rightmost point associated to the tail of each soliton$^{\diamond}$. These lines cut the epigraph of the excursion into disjoint regions that we color with the corresponding color
of the boundary. We imagine each colored region as a physical two dimensional object glued recursively to generate the interface. Indeed we will show that the excursion can be obtained as the final boundary of a region obtained adding with a tetris-like construction one after the other upside oriented triangles having elastic diagonal sides
\begin{figure}[h!]
	
	\centering
	
	\includegraphics[trim={0 25mm 0 25mm},clip]{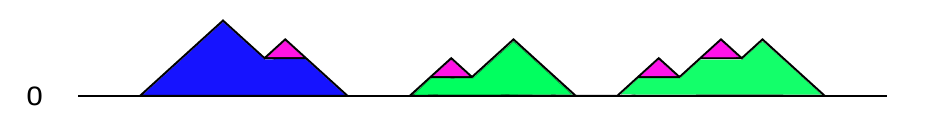}
	
	\caption{The walk of a ball configuration where the down-steps $\diagdown$ associated to records have been substituted by horizontal lines $\frac{\ \ }{\ \ }$ at height zero. The region below the graph of each excursion has been colored like in Fig.~\ref{14-heta-dav}}\label{marimonti}
	
\end{figure}

It is convenient to represent the walk associated to a ball configuration in $\cX$ as follows. We transform each down oriented step $\diagdown$ associated to a record into an horizontal line $\frac{\ \ }{\ \  }$ at height $0$. The parts of the walk associated to the excursions are vertically shifted to level 0, remaining concatenated one after the other by an horizontal line of length equal to the number of records separating the excursions in the walk. The walk is therefore represented by infinitely many pieces of horizontal lines at the zero level (the \emph{sea} level) separated by infinitely many finite excursions (mountain profiles). This is the construction associated to the \emph{Harris walk} (see for example \cite{LLP}). See Fig.~\ref{marimonti} for an  example with three excursions where we implemented also the same coloring of Fig.~\ref{14-heta-dav}.

\begin{figure}[h!]
	
	\centering
	
	\includegraphics[width=.6 \textwidth]{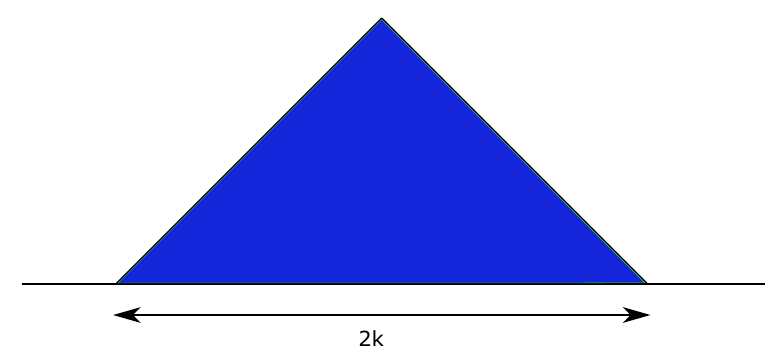}
	
	\caption{An isolated $k$-soliton$^{\diamond}$ represented as a right angle triangle
		with horizontal hypotenuse of size $2k$, up oriented and having the other sides
		of equal length. The hypotenuse is rigid while the other sides are soft and deformable. }\label{solidso}
	
            \end{figure}
            We discuss how to generate
one single excursion from a slot diagram using the HT
decomposition.
We represent an isolated $k$-soliton$^{\diamond}$ as a
right-angle isosceles triangle having hypotenuse of size $2k$. The triangle is oriented in such a way that the hypotenuse is horizontal and the triangle is upside oriented, see Fig.~\ref{solidso}.

The basic mechanism of attaching solitons$^{\diamond}$ is illustrated in Fig.~\ref{4-1-solitoni}.
In the first up left drawing we represent a 4 soliton$^{\diamond}$ as an upper oriented triangle
and draw below it the corresponding slots$^{\diamond}$. The leftmost slot$^{\diamond}$ corresponds to a record located just on the left of the excursion. Colors are like before: violet=1, red=2, green=3, blue=4. In the drawing number $i$ with $i=0,\dots ,6$ we attach
one 1-soliton$^{\diamond}$ to the 1-slot$^{\diamond}$ number $i$. This corresponds to attach a triangle with
horizontal hypotenuse of size 2 in correspondence of the position of the corresponding slot$^{\diamond}$.
\begin{figure}[h!]
	
	\centering
	
	\includegraphics[trim={0 6mm 0 7mm},clip]{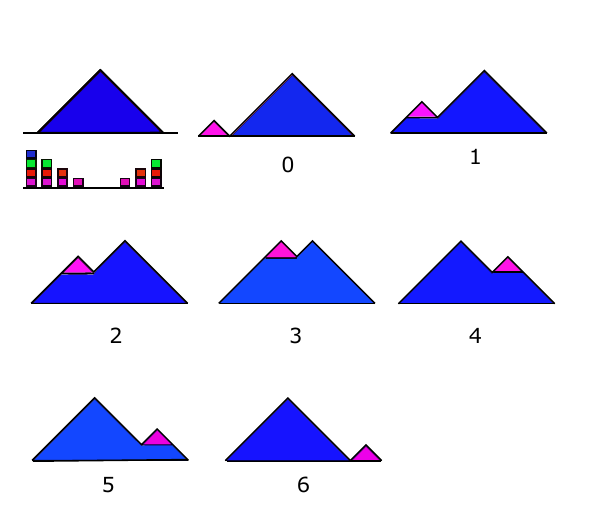}
	
	\caption{An isolated 4-soliton$^{\diamond}$ (blue, up left diagram) with the corresponding $k$-slot$^{\diamond}$ for $k=1,2,3,4$ (colors as in the previous Section). The record to the left of the excursion is the unique $4$-slot$^{\diamond}$. We attach one 1-soliton$^{\diamond}$ (violet) in all the possible ways. In the drawing number $i$ the 1-soliton$^{\diamond}$ is attached to the 1-slot$^{\diamond}$ number $i$.}\label{4-1-solitoni}	
\end{figure}
The Figure is exhaustive and represents all the possible
ways of attaching the 1-soliton$^{\diamond}$.  The precise rules and the change of the positions of the slot$^{\diamond}$ during the attaching procedure to generate an excursion, are illustrated using as an example the following slot diagram
\begin{align}\label{scottex}
k \to &\; x_k\nonumber\\
4 \to &\; (1)\nonumber\\
3 \to &\; (0,0,0)\\
2 \to &\;  (0,1,0,1,0)\nonumber\\
1 \to &\;  (0,0,0,1,0,0,0,0,0,0,0)\nonumber
\end{align}

We construct now the excursion that corresponds to this slot diagram. We do this  using the HT decomposition since it is simpler but the TS decomposition gives as a result the same excursion.
First we observe that the maximal soliton$^{\diamond}$ size in \eqref{scottex} is $4$ and there is just one maximal soliton$^{\diamond}$.
\begin{figure}[h!]
	
	\centering
	
	\includegraphics{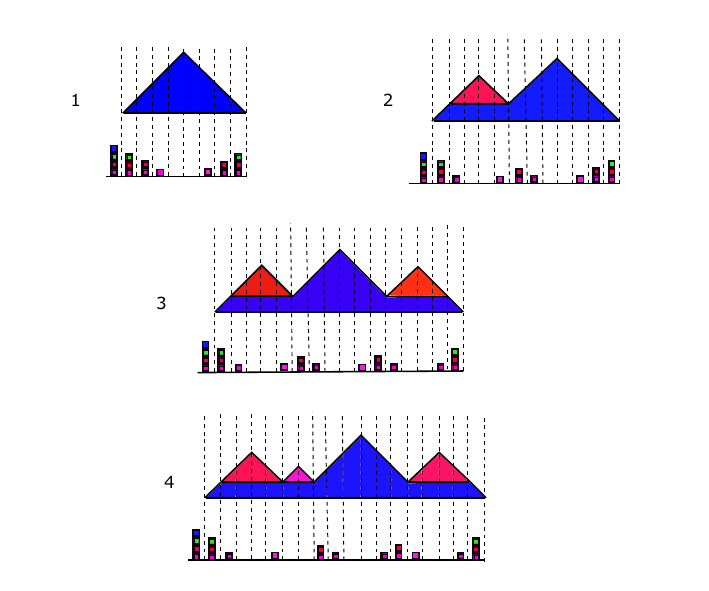}
	
	\caption{The growing of the excursion codified by the slot diagram \eqref{scottex}, adding solitons$^{\diamond}$ one after the other from the biggest to the smallest. Adding a new soliton$^{\diamond}$ corresponds to add an up oriented triangle with horizontal hypotenuse in correspondence of the slot$^{\diamond}$ specified by the diagram. The diagonal sides of the already presented triangles are soft and deform in order to glue perfectly the geometric figures.}\label{slor-ex}
	
\end{figure}
We start therefore with drawing 1 of Fig.~\ref{slor-ex}
where we have a blue 4-soliton$^{\diamond}$ represented  by a upside oriented triangle. Below it we represent also the $\ell$-slots$^{\diamond}$ for $\ell<4$; the leftmost $\ell$-slot$^{\diamond}$ is always located in the record just on the left of the excursion. Since there are no 3-solitons$^{\diamond}$ we do not have to add
green triangles having hypotenuse of size 6. We proceed therefore attaching 2-solitons$^{\diamond}$ represented as upside oriented triangles with hypotenuse of size  4. We have two of them and we have to attach to the 2-slot$^{\diamond}$ number 1 and 3. We label as $\ell$-slot$^{\diamond}$ number zero the one associated to the record and number the other ones increasingly
from left to right. There are 5 2-slot$^{\diamond}$ in the drawing 1 of Fig.~\ref{slor-ex} (that are the piles of colored squares containing a red one). We start attaching  the 2-soliton$^{\diamond}$ to the 2-slot$^{\diamond}$ number 1. This means that the left corner of the red triangle has to be attached to the boundary of the colored region in correspondence to
the intersection of the boundary with the dashed line just on the right of 2-slot$^{\diamond}$
number one. Since the bottom edge of the triangles is rigid the blue diagonal side deforms in order to have a perfect gluing. This is illustrated in the drawing number 2 of Fig.~\ref{slor-ex}. Note that the slots$^{\diamond}$ in correspondence with the shifted diagonal sides of the blue triangle are shifted accordingly. There are moreover
new 1-slot$^{\diamond}$ created in correspondence with some red diagonal sides. The same gluing
procedure is done with a second red triangle in correspondence of the 2-slot$^{\diamond}$ number 3, and this is shown in the drawing number 3 of Fig.~\ref{slor-ex}. Note that we do this two gluing operations one after the other to illustrated better the rules but they can be done simultaneously or in the reversed order, the final result is the same. This is because attaching a $k$-soliton we generate just new j-slot$^{\diamond}$ with $j<k$.
Finally we have to attach a 1-soliton$^{\diamond}$ that is a violet triangle in the 1-slot$^{\diamond}$
number 3 and this is shown in the final drawing 4 of Fig.~\ref{slor-ex}.

\subsection{Conserved quantities}

We discuss a way to identify conserved quantities using the first definition of the dynamics in \S\ref{bbs-sec} applied to a finite excursion. Recall that the basic step consists on pairing all neighboring boxes of type 10 by drawing a line from the 1 to the 0 and then remove the paired boxes to iterate, see Fig.\ref{cerchi}. Call $r_i$ the number of lines drawn in the $i$-th iteration of the construction. We have $r_1\geq r_2\geq\dots \geq r_M$, where $M$ is the number of iterations necessary to pair all the balls. In the example of Fig.~\ref{cerchi} we have $M=4$ and $r_1=8, r_2=2, r_3=r_4=1$.

\begin{proposition}[Yoshihara, Yura, Tokihiro \cite{YYT}]
  The numbers $r_i$ are invariant for the dynamics. That is,
  \begin{align}
    \label{ri=ri}
    r_i(\eta)=r_i(T\eta)\quad \text{ for any }i.
  \end{align}
\end{proposition}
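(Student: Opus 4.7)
The plan is to identify the counts $r_i$ with the conjugate of the Takahashi--Satsuma soliton-size partition and then invoke soliton conservation. First I would reduce to a single excursion: as explained in \S\ref{sec2}, no pairing line of the first definition crosses a record of $W\eta$ and $T$ acts excursion by excursion, so both sides of \eqref{ri=ri} decompose additively over the excursions of $W\eta$. It therefore suffices to prove $r_i(\eta)=r_i(T\eta)$ assuming that $W\eta$ is a single finite excursion.

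The combinatorial heart of the argument is the identity
\[
r_i(\eta) \;=\; \sum_{k\ge i} n_k(\eta),
\]
where $n_k(\eta)$ is the number of $k$-solitons in the TS decomposition; equivalently, $(r_1,r_2,\dots)$ is the conjugate Young diagram of $(k)^{n_k}$. The pairing of the first definition is the iterated cancellation of adjacent $\bullet\circ$'s, which coincides with the standard nested bracket matching obtained by reading each ball as an opening bracket and each empty box as a closing bracket. The step at which a bracket pair is closed equals its height in the nesting tree of the matching -- innermost pairs have height $1$ and every other pair has height one more than the maximum height among the pairs strictly inside it -- so $r_i$ counts the bracket pairs at nesting height exactly $i$. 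In walk terms, the pair with ball at $j$ and matching empty at $p(j)$ has nesting height $\max_{j\le \ell \le p(j)}\xi(\ell)-\xi(j-1)$, the peak height of the sub-excursion it delimits.

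To establish the identity I would induct on the slot diagram through the attachment construction of \S\ref{appsoli}. For the empty excursion both sides vanish. For the inductive step, one must check that attaching a $k$-soliton to a $k$-slot of $\vep$ adds exactly one to $r_i$ for $i=1,\dots,k$ and leaves every other $r_i$ unchanged, which matches the effect on $\sum_{\ell\ge i}n_\ell$. The key combinatorial input is the observation that every $k$-slot lies inside some bracket pair of nesting height at least $k+1$, so the new nested chain of heights $1,\dots,k$ introduced by the attached $k$-soliton becomes a fresh subtree of the nesting forest without bumping up the heights of any ancestor pair. With the identity in place, invariance of the $r_i$ is immediate from the Takahashi--Satsuma conservation of the $n_k$'s -- reviewed in the Introduction and in \S\ref{sec3} and at the core of BBS theory -- since
\[
r_i(\eta)=\sum_{k\ge i}n_k(\eta)=\sum_{k\ge i}n_k(T\eta)=r_i(T\eta).
\]

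The main obstacle is the inductive attachment claim: verifying that inserting a $k$-soliton really preserves all pre-existing nesting heights, so that the new boxes land in columns $1,\dots,k$ of the conjugate Young diagram rather than in columns shifted by the local depth at the slot. This reduces to the structural fact that $k$-slots are nested inside bracket pairs of height at least $k+1$, which follows in principle from the definition of a $k$-slot as a head or tail box of a strictly larger soliton, but the translation from TS runs to bracket-nesting heights takes some care. An alternative -- avoiding this bookkeeping -- is to identify the nesting tree of the bracket matching with the excursion tree of \S\ref{sec4} and to read $r_i$ off its branch decomposition directly.
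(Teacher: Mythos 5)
Your argument has a genuine logical gap: it proves \eqref{ri=ri} by writing $r_i=\sum_{k\ge i}n_k$ and then invoking the conservation of the soliton numbers $n_k$ under $T$. But in this paper the implication runs in exactly the opposite direction: the invariance of the $r_i$ is the \emph{input} from which the conservation of the $n_k$ is deduced (see the Young diagram discussion following the proposition, where the $n_k$ are read off from the $r_i$ via \eqref{uscita1} and the conservation of the number of $k$-solitons is obtained as a consequence of \eqref{ri=ri}). Nowhere in your proposal is soliton conservation established independently; you cite it as ``reviewed in the Introduction and at the core of BBS theory'', but it is a theorem at least as deep as the one you are proving, and reducing \eqref{ri=ri} to it makes the argument circular in the present context. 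On top of this, the identity $r_i=\sum_{k\ge i}n_k$ itself --- which is correct, and is essentially \eqref{uscita1} --- is only sketched: you yourself flag the inductive attachment step (that inserting a $k$-soliton at a $k$-slot creates a fresh nested chain of bracket heights $1,\dots,k$ without bumping the height of any enclosing pair) as the main obstacle and leave it unverified. That step is delicate precisely at slots carried by head or tail boxes of larger solitons, where one must check that the enclosing bracket pair has nesting height at least $k+1$.

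For comparison, the paper's proof is self-contained and never mentions solitons. It introduces the left-moving dynamics $T^*$ and uses two elementary facts: (i) reversibility, $T^*T\eta=\eta$, together with the observation that the pairing lines of $T^*$ applied to $T\eta$ are the very same lines drawn for $T$ applied to $\eta$, which gives \eqref{euno}; and (ii) the identity \eqref{edue}, $r_i(\eta)=r^*_i(\eta)$, which holds because at each iteration both constructions delete one box from each end of every finite run, so both sequences of counts are determined by the multiset of run sizes alone. Applying (ii) to $T\eta$ and combining with (i) yields \eqref{ri=ri} immediately. To salvage your route you would need first to prove soliton conservation by an independent argument (e.g.\ tracking solitons through collisions as in Takahashi--Satsuma) and then to complete the inductive proof of $r_i=\sum_{k\ge i}n_k$; as written, both halves are missing.
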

\begin{figure}[h!]
	
	\centering
	
	\includegraphics[trim={0 15mm 0 2mm},clip]{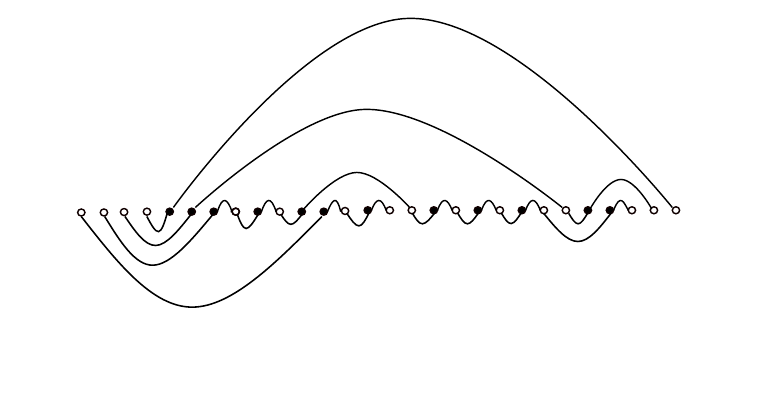}
	
	\caption{The pairing construction for a dynamics evolving to the right (lines above) and the pairing construction for the same configuration of balls but evolving to the left (lines below).}\label{cerchi-sotto}
	
\end{figure}

\begin{proof}
  We present a simplified version of the argument given by \cite{YYT}. The basic property that we use is the reversibility of the dynamics. Introduce the evolution $T^*$ that is defined exactly as the original dynamics apart the fact that balls move to the left instead of to the right. The reversibility of the dynamics is encoded by the relation $T^*T\eta=\eta$. This fact follows from the definition: looking at Fig.~\ref{cerchi} the configuration $T\eta$ is obtained just coloring black the white boxes and white the black ones. The evolution  $T^*$ is obtained pairing balls with empty boxes to the left. The lines associated to  $T^*$ for the configuration $T\eta$
are exactly the same as those already drawn. The only  difference is that the balls are now transported from right to left along these lines. Denote  $r^*_i$ the number of lines drawn at iteration number $i$ for the evolution $T^*$. Since the lines used are the same we have
\begin{equation}\label{euno}
r_i(\eta)=r^*_i(T\eta)\,, \qquad \forall i\,.
\end{equation}
Now evolve the original configuration $\eta$ according to $T^*$. 
In Fig.~\ref{cerchi-sotto} we draw above the lines corresponding to the evolution $T$ and below those corresponding to $T^*$. 
We want now to show that
\begin{equation}\label{edue}
  r_i(\eta)=r^*_i(\eta)\,,\qquad \forall i\,.
\end{equation}
Recall that a run is a sequence of consecutive empty or full boxes. In the configuration $\eta$ of our example there are two infinite empty runs and then alternated respectively 8 and 7 full and empty finite runs.

The first step is to show that $r_1(\eta)=r^*_1(\eta)$. This is simple because these numbers coincide with the number of full runs in the configuration $\eta$. The second step of the algorithm consists on erasing the rightmost ball of every occupied run and the leftmost empty box of every empty run for $T$, while the leftmost ball of every occupied run and the rightmost empty box of every empty run are erased for $T^*$. Observe that $r_2(\eta)$ coincides with the number of full runs in a configuration obtained removing the balls and the empty boxes paired in the first step. This configuration is obtained from $\eta$ decreasing by one the size of every finite run. If in $\eta$ there are some runs of size 1 then they disappear. The same happens for computing $r^*_2(\eta)$. Since we are just interested  on the sizes of the alternating sequences of empty and full runs, erasing on the left or on the right is irrelevant. We deduce $r_2(\eta)=r^*_2(\eta)$ since
both coincide with the number of finite occupied runs of two configurations having the same sequence of sizes of the runs.
Iterating this argument we deduce \eqref{edue}. Now,
using \eqref{euno} and \eqref{edue} we deduce \eqref{ri=ri}.
\end{proof}

\subsection{Young diagrams} We discuss now a generalization of the conservation property \eqref{ri=ri} to the case of infinite configurations and the relation with the conservation of the solitons.
Since the numbers $r_i$ are monotone, it is natural to represent them using a Young diagram, \cite{2018arXiv180808074K}.
A Young diagram is a diagram of left-justified rows of boxes where any row is not longer than the row on top of it.
We can fix for example the number $r_i$ representing the length of the row number $i$ from the top. The number of
iterations $M$ corresponds to the number of rows. The Young diagram associated to the example in Fig.~\ref{cerchi} is therefore
\begin{equation}\label{primoy}
\yng(8,2,1,1)\,.
\end{equation}
This diagram can be naturally codified by the numbers $r_i$, representing the sizes of the rows, as
$(8,2,1,1)$.
Another way of codifying a Young diagram is by the sizes of the columns. This gives another Young diagram that is called the \emph{conjugate} diagram and it is obtained by reflecting the diagram across the diagonal. The same diagram \eqref{primoy} can therefore be codified as $[4,2,1,1,1,1,1,1]$. Finally another equivalent codification can be given specifying the numbers $n_1, n_2, \dots , n_M$ of columns of length respectively
$1,2,\dots ,M$. For the Young diagram above we have for example $n_1=6, n_2=1, n_3=0, n_4=1$.  The numbers $r_i$ and $n_i$ give
alternative and equivalent coding of the diagram and are related by
\begin{equation}\label{uscita1}
r_i=\sum_{m=i}^{M}n_m\,, \qquad n_i=r_i-r_{i+1}\,
\end{equation}
where we set $r_{M+1}:=0$.

The number $n_i$ can be interpreted as the number of solitons of length $i$. Take for example the diagram \eqref{primoy} and cut it into vertical slices obtaining
\begin{equation}\label{fette}
\yng(1,1,1,1) \qquad \yng(1,1) \qquad \yng(1) \qquad \yng(1) \qquad \yng(1) \qquad \yng(1) \qquad\yng(1) \qquad
\yng(1)
\end{equation}
The original Young diagram can be reconstructed gluing together
the columns in decreasing order from left to right and justifying all of them to the top.
Each column of height $k$ in \eqref{fette} will represent a $k$-soliton on the dynamics. We are not giving a formal proof of this statement it can however easily be obtained by the construction in \S\ref{sipuo}. We will show indeed that the soliton decomposition can be naturally done using trees codifying excursions. In \S\ref{sipuo} we show how the trees can be constructed
using the lines of the first definition in \S\ref{bbs-sec} getting directly the relationship among the Young diagrams and the solitons.   According to this, the configuration $\eta$ having associated the Young diagram \eqref{primoy}  obtained
gluing again together the columns in \eqref{fette}, contains one 4-soliton one 2-soliton and 6 1-solitons.

The Young diagram contains only some information about the configuration of balls, i.e. the map that associates to $\eta$ its Young diagram is not invertible, and for example
there are several configurations of balls giving \eqref{primoy} as a result. The one in Fig.~\ref{cerchi} is just one of them. Essentially the Young diagram contains just the information concerning the numbers of solitons contained in the configuration but not the way in which they are combinatorially organized.

In the example discussed above we worked with a configuration of balls having one single non trivial finite excursion.
Consider now a finite configuration $\eta$ whose walk representation contains more than one excursion. Our argument on the conservation of the numbers $r_i$ proves that the global Young diagram associated to the whole configuration is invariant by the dynamics. Let us consider however separately the single excursions.
Recall that two different excursions are separated by empty boxes from which there are no lines exiting. For example in Fig.~\eqref{cerchi-q} there are 3 excursions that we surrounded by rectangles to clarify the different excursions.

We construct for each excursion separately the corresponding Young diagram.
For the example of Fig.~\ref{cerchi-q} the three Young diagrams are
\begin{equation}\label{ty}
\yng(3,1,1) \qquad \yng(2) \qquad \yng(1,1)
\end{equation}
By definition the global Young diagram that is preserved by the dynamics is the one having as length of the first row (the number $r_1$) the sum of the lengths of the first rows of the three diagrams, as length of the second row (the parameter $r_2$) the sum of the length of the second rows of all the Young diagrams and so on. This means that the global Young diagram is obtained suitably joining together the single Young diagrams. In particular the gluing procedure is the following.
We have to split the columns of each single diagram then put all the columns together and glue them together as explained before, i.e. arranging them in decreasing order from left to right and justifying all of them to the top. For example
the first Young diagram on the left in \eqref{ty} is split into
$$
\yng(1,1,1) \qquad \yng(1) \qquad \yng(1)
$$
For the second diagram in \eqref{ty} we have
two columns of size 1  $\yng(1)\ \yng(1)$ while for the third one we have one single
column of size 2 $\yng(1,1)$.

The global Young diagram for the example of Fig.~\ref{cerchi-q} is therefore
$$
\yng(6,2,1)
$$
The number $n_i$ of columns of length $i$ in the global diagram is obtained as the sum of the number of columns of size $i$ on the single diagrams. Also the numbers $r_i$ are obtained summing the corresponding row lengths on each single group (with the usual
convention that a Young diagram with $M$ rows has $r_j=0$ for $j>M$).

The shapes of the single diagrams in \eqref{ty} are not invariant by the dynamics. Even the number of such diagrams is not conserved since during evolution the number of excursions may change. It is instead the total number of columns of each given size to be conserved. More precisely given a configuration $\eta$ we can construct the Young diagrams for each excursions and then we can cut them into single columns. The configuration of balls
$T\eta$ will have different excursions with different Young diagrams but they will be obtained again combining differently into separated Young diagrams the same columns obtained for the configuration $\eta$. The Box-Ball dynamics  preserves the number of columns of size $k$ for each k. Indeed this is nothing else that a different identification of the traveling solitons again by the construction in \S\ref{sipuo}.

If $\eta$ is an infinite configuration with a walk having all the records, we can construct a Young diagram for each excursion. Cutting the diagrams along the columns we obtain the solitons contained in the excursion.

\emph{Slot diagrams and Young diagrams. } Since a slot diagram describes the number of solitons per slot, we can associate a Young diagram to a
slot diagram $x$ as follows:  $M(x)$ is the
number of rows and  $n_k$ is the number of columns of length $k$.
The diagram  is constructed gluing  $n_M$ columns of length $M$, then $n_{M-1}$ columns of length $M-1$ up to $n_{1}$ columns of length $1$. For example the Young diagram associated to the slot diagram \eqref{scottex} is given by
\begin{equation}\label{gloria}
\yng(4,3,1,1)\,.
\end{equation}

\section{Trees, excursions and slot diagrams}
\label{sec4}

In this section we provide an alternative decomposition of an excursion using a bijection between soft excursions and planar trees. The construction is a slight variant of the classical bijection of strict excursions and planar rooted trees, see \cite{MR942038,evans,LG}. There are several ways of codifying planar trees (see for example \cite{LG}). We will try to use a direct pictorial approach introducing less algebraic notation as possible.

\subsection {Tree representation of excursions}
\label{sected}
In this subsection we summarize classical results mapping finite trees to excursions, see for instance \S1.1 of the lecture notes of Le Gall \cite{LG}.  Start with the graph of a soft excursion as in Fig.~\ref{excursion-tree}. Draw horizontal lines corresponding to the integer values of the height. The region below the graph of the excursion is cut into disjoint components by the horizontal lines. Associate one node to each connected component. The root is the node corresponding to the bottom region.  The tree is obtained by drawing an edge between nodes whose associated components share a piece of a horizontal line. The construction is illustrated in Fig.~\ref{excursion-tree} where the root is drawn as a $\bullet$ while the other nodes as a  $\circ$.
\begin{figure}[h!]

\centering

\includegraphics[width=.8\textwidth]{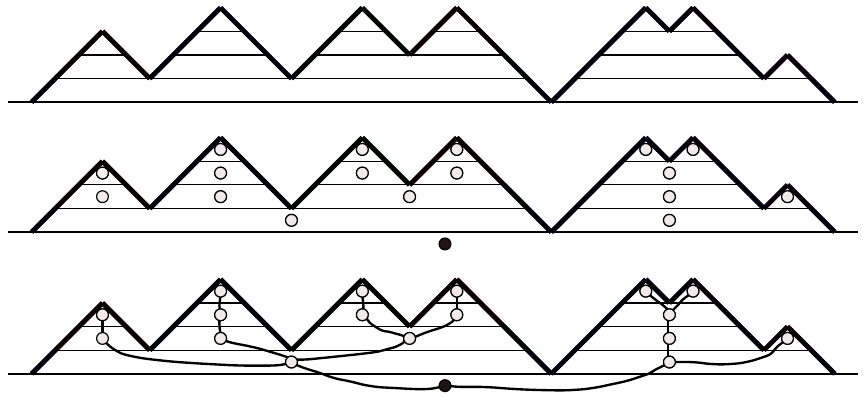}

\caption{Construction of a planar tree associated to the excursion of Fig.\ref{excursion}. The root is a black circle and the nodes are white circles. The root is associated to a record.}\label{excursion-tree}

\end{figure}
The tree that we obtain is rooted since there is a distinguished vertex and it is planar. This means that it is embedded on the plane where the graph
of the excursion is drawn. A consequence of this specific embedding is that every vertex different from the root has an edge incoming from below and all the other edges
are ordered from left to right going clockwise.

\smallskip
\begin{figure}[h!]
	
	\centering
	
	\includegraphics[width=.8\textwidth]{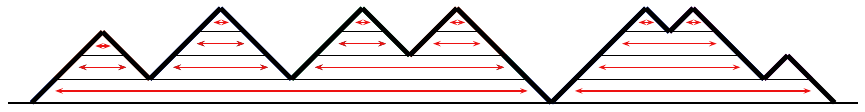}
	
	\caption{The pairing of opposite diagonal sides of an excursion. Each double arrow corresponds to a node of the tree different from the root.}\label{excursion-pairing}
	
      \end{figure}
In \cite{LG} the map from the planar tree to the excursion is given in terms of a Dyck path. The excursion gives the distance to the root of a vehicle that turns around the tree at speed one edge per unit of time.
The reverse bijection amount to glue the edges face to face below the
excursion (in order to recover the edges), as in Fig.\ref{excursion-pairing}.

\subsection{Trees and pairing algorithm}\label{sipuo}      The tree associated to an excursion can be constructed using the pairing definition of the dynamics  of Fig.~\ref{cerchi}. As before, draw dashed horizontal lines in correspondence of the integer heights that cut the epigraph of the excursion into disjoint regions. Pair the opposite diagonal faces of each region, connected by dashed double arrows in Fig.~\ref{excursion-pairing}. Since the left face is of type  $\diagup$ and the right one is of the type $\diagdown$, corresponding respectively to balls and empty boxes, we obtain exactly the pairing of the first definition of the dynamics. Indeed, the pairings of the first iteration of the first definition of the dynamics coincide exactly with the pairing of the two opposite diagonal sides near each local maxima. Then remove the paired objects and iterate to obtain a proof.

We construct the planar tree associating the root to the unbounded upper region of the upper half plane and one node to each pairing line. Nodes associated to maximal lines are linked to the root. Consider a node A associated to a maximal line. Node B associated to another line is connected to A if: 1) the line associated to B is surrounded by the line associated to A and 2) removing the maximal line associated to A the line associated to B becomes maximal. The tree is constructed after a finite iteration of this algorithm, see Fig.~\ref{neveu-red} where the planar tree is red and downside oriented.

\begin{figure}[h!]
	
	\centering
	
	\includegraphics[width=.4 \textwidth]{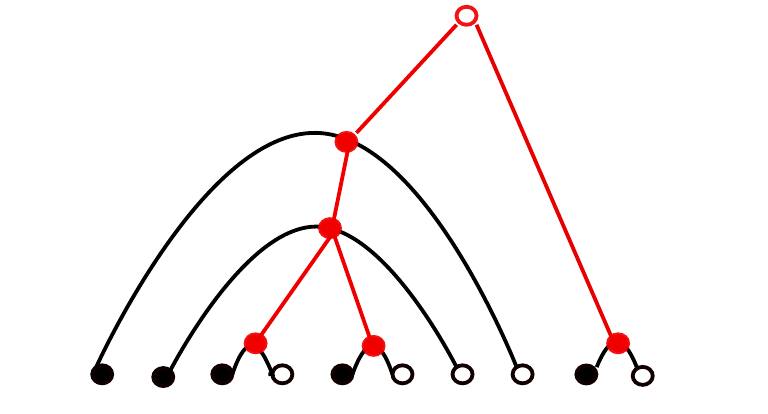}
	
	\caption{The construction of the planar tree associated to an excursion using the pairing between balls and empty boxes.}\label{neveu-red}
	
\end{figure}

\subsection{Branch identification of planar trees}\label{ftd}
We discuss a natural branch decomposition of a rooted planar tree that is in correspondence with the soliton decompositions previously discussed.

We give 3 equivalent algorithms to identify the \emph{branches} of a planar rooted tree.

\emph{Branch identification I}

Step 1. Let $A_1$ be the set of the leaves (nodes with only one neighbor). Associate a distinct color and  the \emph{generation number} 1 to each leaf. The root is black, a color not allowed for the other nodes.

Step $\ell$. Let $A_{\ell-1}$ be the set of numbered and colored nodes after $\ell-1$ steps. Let $\ttN_\ell$ be the set of nodes with all offsprings in $A_{\ell-1}$. To each $\ttn\in \ttN_\ell$ give the color of the rightmost neighbor among those with bigger generation number, say $g$, and give generation number $g+1$ to $\ttn$. Stop when all nodes are colored.
\begin{figure}[h!]

	\centering
	
	\includegraphics[width=.6 \textwidth] {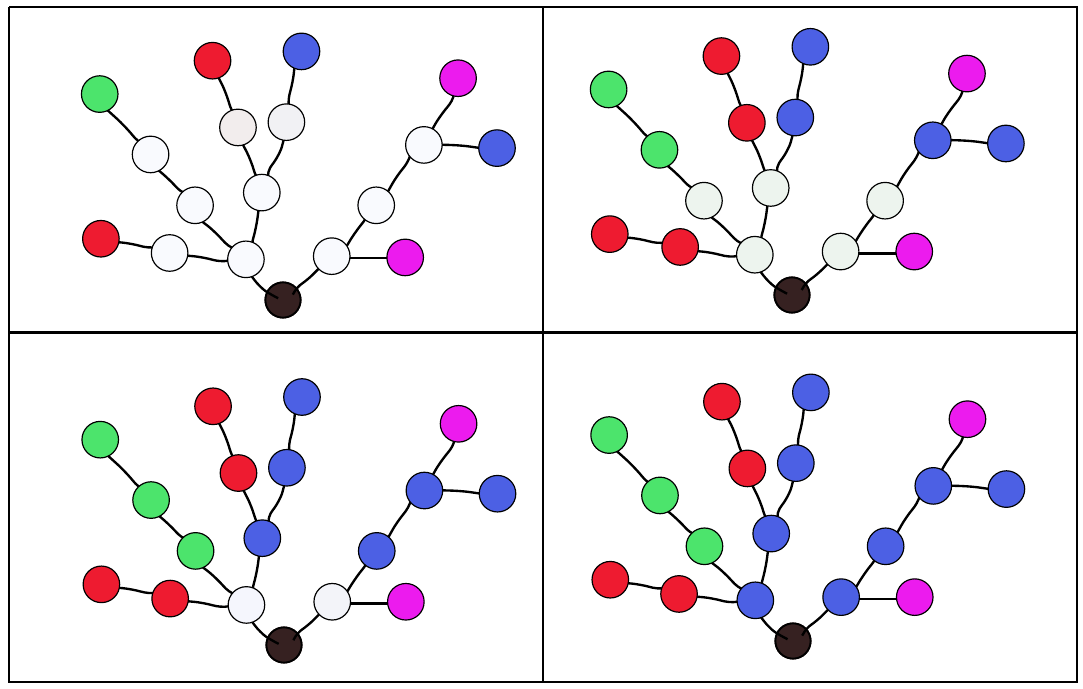}
	
	\caption{Branch identification I. }\label{14-uc}
      \end{figure}

      In Fig.~\ref{14-uc} give a distinct color to each leaf (we have for simplicity repeated colors in the picture). In each step to each not-yet-colored node with all offsprings already colored give the color of the rightmost maximal offspring. After coloring all nodes, identify the color of branches of the same size (knowing the result, we have started with those colors already identified).

A \emph{$k$-branch} is a one-dimensional path with $k$ nodes all of the same color and $k$ edges, one of which is incident to a node of a different color. In Fig.\ref{14-uc} we have colored the tree produced by the excursion in Fig.\ref{excursion-tree} and have identified 2 violet 1-branches, 2 red 2-branches, 1 green 3-branch and 2 blue 4-branches (for simplicity we used a simplified convention for color, see the caption for the explanation).

\emph{Branch identification II}

Step 0. Enumerate the colors. In our example we use violet for 1-branches, red for 2-branches, green for 3-branches and blue for 4-branches.

Step 1. Paint all leaves with color 1, violet.

Step $\ell$. Update those nodes with all offsprings entering into nodes already colored during steps 1 up to  $\ell-1$. Give color $\ell$ to updating nodes and change to color $\ell$ those nodes belonging to the rightmost offspring path of size $\ell$ starting from each updating node. See Fig.~\ref{14-dav}.
\begin{figure}[h!]
	
	\centering
	
	\includegraphics[width=.6 \textwidth] {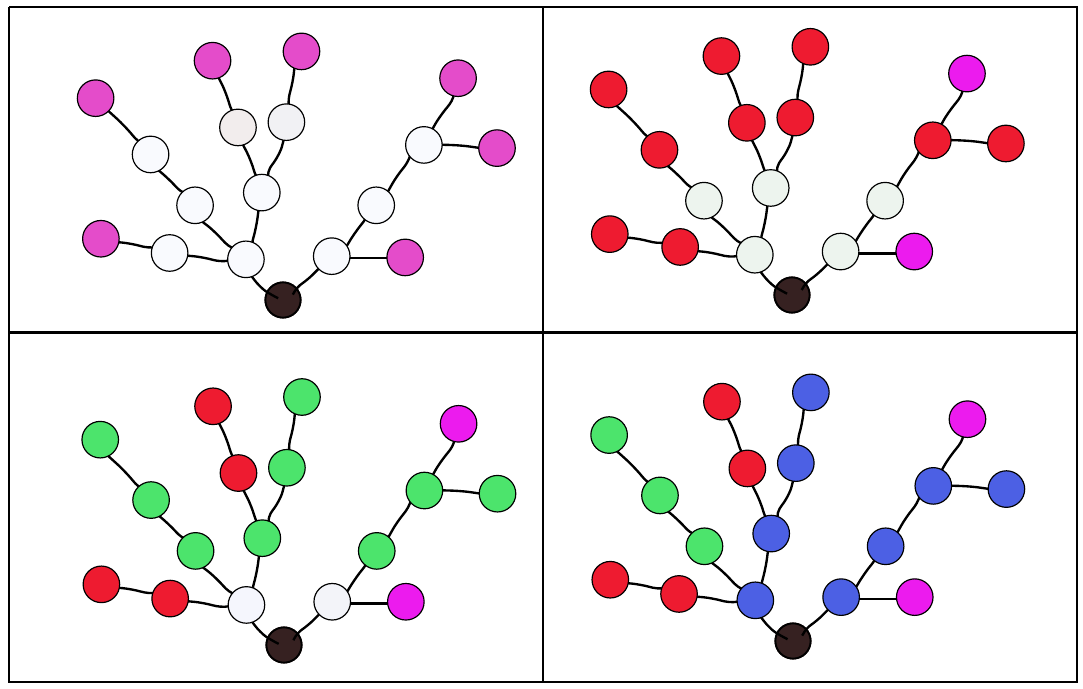}
	
	\caption{Branch identification II.}\label{14-dav}
	
      \end{figure}

      In Fig.~\ref{14-dav} we give color 1 (violet in this case) to each leaf. In step $2$ (a) give color $2$ (red) to all nodes having all offsprings already colored and (b) change to color $2$ each already colored node belonging to the rightmost offspring path with $2$ nodes starting at each updating node. In step 3 use color green and in step 4 use color blue. The final branch decomposition is the same as in Fig.\ref{14-uc}.
 \FloatBarrier 

\emph{Branch identification III}

Step 0. Orient the tree toward the root. Consider the oriented paths starting from the leaves of the tree.  Remove the root but not the edges incident to the root.

Step 1: Search for the maximal directed paths starting from the leaves. If two or more of them share at least one edge, select just the rightmost path among those. Observe that the last edge is incident only to one node. A selected path with $k$ nodes is named $k$-branch. Remove the selected branches.

Step 2. If all paths have been removed, then stop. Otherwise go to step 1.

The tree is oriented just to define the procedure. The branches selected and removed constitute the branch decomposition of the tree. In Figure \ref{15-dav} we apply this procedure to the same example of the previous procedures. The result is the same.
\begin{figure}[h!]
	
	\centering
	
	\includegraphics[width=.6 \textwidth]{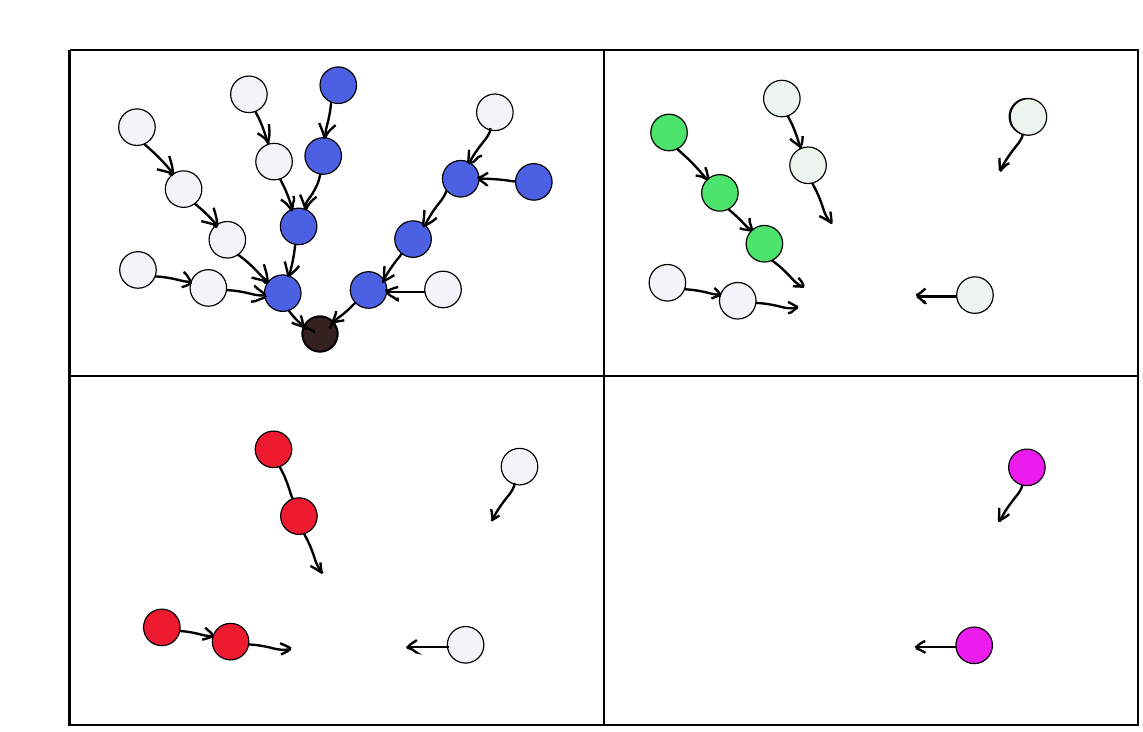}
	
	\caption{Branch identification III.}\label{15-dav}
	
      \end{figure}

      In Fig.~\ref{15-dav}. First square represents the first iteration.  There are 3 paths of length 4 sharing the left edge incident to the root and two paths of length 4 sharing 3 edges. The rightmost path of each group is identified as a 4-branch and colored blue. The second iteration identifies one 3-branch in green; the third iteration identify two 2-branches and the forth iteration identifies two 1-branches. Putting back the colored branches to their original position we obtain the last picture of Fig.~\ref{14-dav}


 \FloatBarrier 

\subsection{Tree-induced soliton decomposition of excursions}
We now take the tree produced by an excursion, as illustrated in Fig.~\ref{excursion-tree}, use any algorithm of \S\ref{ftd} to identify its branches and use the colored tree to identify solitons, as follows.
Put the colored tree back into the excursion and color the diagonal boundaries of the region associated to each node with the color of the node. Each $k$-branch is then associated to $k$ empty and $k$ occupied boxes with the same color; we call those boxes and their content a $k$-soliton*. We use the * to indicate solitons and slots in the tree-induced decomposition. In this case all solitons*  are oriented up, that is, the head of each soliton* is to the left of its tail. See Fig.~\ref{tree-soliton-right}.
\begin{figure}[h!]
	\centering
	\includegraphics[width= .8\textwidth] {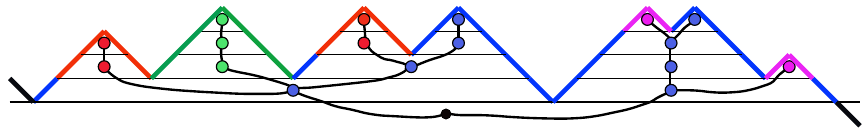}
	\caption{Soliton* decomposition of the excursion of Fig.\ref{excursion} using the branch decomposition of \S\ref{ftd} of the tree associated to the excursion as obtained in Fig.\ref{excursion-tree}, .}\label{tree-soliton-right}
      \end{figure}

      \begin{proposition}[HT and tree decomposition]
        \label{ht2t}
        Given any excursion $\vep$, the HT soliton decomposition of $\vep$ coincides with the tree decomposition of $\vep$.
      \end{proposition}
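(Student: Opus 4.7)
The plan is to prove this proposition by induction on the number of non-root nodes of $T=T(\vep)$. The base case of the empty excursion is immediate, as both decompositions produce no solitons.

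For the inductive step, I would first make the structural correspondence precise: to each non-root node $v$ of $T$, associate the pair $(\bullet_v, \circ_v)$ consisting of the leftmost up-step and rightmost down-step on the lower boundary of $v$'s region. These lie at the two corners of $v$'s base at a common height and satisfy $\bullet_v < \circ_v$. Leaves of $T$ correspond to adjacent $\bullet\circ$ pairs at peaks of $\vep$. An $s$-branch is a chain of nodes $v_1, v_2, \ldots, v_s$, where $v_{i+1}$ is the parent of $v_i$, $v_1$ is the leaf at the top, and $v_s$ is either a child of the root or not the rightmost maximal-generation offspring of its parent; such a branch produces heads $\{\bullet_{v_i}\}_i$ strictly to the left of tails $\{\circ_{v_i}\}_i$, giving exactly the tree-induced $s$-soliton$^{\diamond}$ assigned to this branch.

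The main step is to peel off the soliton produced by HT's first pass and match it to a branch of $T$. If $\vep$ contains a size-$1$ run, its leftmost occurrence is the $\bullet\circ$ pair of a specific leaf $v^*$, and I would verify that $v^*$ is the leftmost $1$-branch in the tree. The argument is that a size-$1$ configuration at this location in $\vep$ reflects precisely the condition that $v^*$'s parent has some child (a leaf or a deeper subtree) strictly to the right of $v^*$ with generation at least $1$, which is exactly what prevents $v^*$ from being the rightmost maximal-generation offspring of its parent in Branch Identification I. Hence HT and the tree both extract the same $1$-soliton $\{\bullet_{v^*},\circ_{v^*}\}$; removing it from $\vep$ corresponds to removing the leaf $v^*$ from $T$, and the induction hypothesis applies to the remainder. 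If instead $\vep$ has no size-$1$ run, every leaf of $T$ is an only child of its parent along a chain, and HT extracts a $k$-soliton for some $k\ge 2$ matching the unique $k$-branch rooted at the leftmost leaf together with its $k-1$ chain ancestors.

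The main technical obstacle is precisely verifying this positional alignment: that the leftmost smallest run of HT lands on the $\bullet\circ$ of the leaf (or the full collection of $\bullet$s and $\circ$s spread along a longer branch) corresponding to the leftmost shortest branch of $T$. This is done by a direct walk analysis around the relevant leaf, using that the size of a run adjacent to a peak precisely encodes the sibling structure of the corresponding node's parent chain -- the same information Branch Identification I uses to decide whether the leaf starts a $1$-branch or extends into a longer chain. Once this local alignment is carried out, the inductive step follows immediately.
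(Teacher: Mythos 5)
Your strategy---induction on the number of non-root nodes, peeling off the soliton that HT extracts first and matching it to a branch of the tree---is genuinely different from the paper's, which deduces Proposition \ref{ht2t} from Proposition \ref{p11} (equality of the slot diagrams) by comparing the two reconstruction procedures (triangle gluing versus branch attaching) applied to a common slot diagram. Your route is more direct and in principle viable; the structural dictionary you set up (non-root node $v\leftrightarrow(\bullet_v,\circ_v)$, leaf $\leftrightarrow$ peak, adjacent sibling $\leftrightarrow$ run of length one) is correct. The problem is that the step you yourself identify as the crux---that the leftmost smallest run lands exactly on a \emph{maximal} monochromatic path of the tree---is supported by claims that are partly false, so as written the inductive step does not go through.

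Concretely: (i) in the size-one case, your stated reason (``the parent has some child strictly to the right of $v^*$ with generation at least $1$'') is correct only when the run is a single $0$; when the leftmost size-one run is a single $1$ at position $p$, the condition $\eta(p-1)=0$ produces a sibling to the \emph{left} of $v^*$, which by itself does not prevent $v^*$ from being the rightmost maximal-generation child of its parent. There the argument must invoke leftmost-ness: if $v^*$ were selected, all its siblings would be leaves, and the adjacent left sibling being a leaf forces a size-one run of $0$'s at $p-1<p$, a contradiction. (ii) In the no-size-one case, ``the unique $k$-branch rooted at the leftmost leaf'' is wrong: for $\eta=(1,1,1,1,0,0,0,1,1,0,0,0)$ the leftmost leaf heads the $4$-branch, while the leftmost smallest run is the pair of $1$'s at positions $8,9$ and HT first extracts $\{8,9,10,11\}$, which is the $2$-branch under the \emph{second} leaf. (iii) Most importantly, you never argue why the extracted chain $v_1,\dots,v_k$ is maximal as a branch, i.e.\ why $v_k$ is not the rightmost maximal-generation child of its parent; this is where minimality and leftmost-ness of the chosen run really enter (an adjacent sibling of $v_k$ of generation at most $k$ would, through the extreme leaf of its subtree, create a run of size at most $k$ contradicting the choice of run). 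All three points are repairable by the walk analysis you allude to, but they constitute the actual content of the proposition, so the proposal as it stands has a genuine gap.
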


      \begin{proof}
        This proposition is consequence of Proposition \ref{p11} below, given in terms of the slot diagrams of both objects.
      \end{proof}

\subsection{Slot diagrams of planar trees}
Think each node of a tree as a geometric object. More precisely identify each node with a circumference that is exactly the boundary of the associated colored region like in Figure \ref{14-new-reverse}. Each incident edge to the node is now a segment intersecting the circumference; different edges intersect different points, called incident points. By convention, we assume that there is a segment incident to the root from below. The arcs of the circumference with extremes in the incident points and with no incident point in the interior are called \emph{slots*}. We will describe a procedure to attach new branches to slots*. We use the same symbol $*$ for the solitons of the previous section and slots here since there is a direct correspondence between the solitons* and the slot* diagram for the branches of the tree.





We say that a node of a tree has $k$ \emph{generations} if it is colored in the iteration number $k$ of the algorithm \emph{Branch identification II}. This is equivalent to say that the maximal path from the node to a leaf, moving always in the opposite direction with respect to the root, has $k$ nodes, including the node and the leaf.

\emph{Slots identification of trees I}

Consider a colored tree with maximal branch of size $m$. Declare the whole circumference of the root of the tree as the $m$-slot* number 0; recall there is an incident edge to this node from below.  Attach the $m$-branches to the unique $m$-slot*. Proceed then iteratively for $k<m$.
Assume that the tree has no $\ell$-branches for $\ell\le k$ and call a slot* $\tts$ a \emph{$k$-slot*} if one of the following conditions hold (a) $\tts$ belongs to the root, (b) $\tts$ belongs to a node with more than $k$ generations, (c) $\tts$ belongs to a node with $k$ generations and all path with $k$ nodes containing a leaf incident to the node, is incident to the right of  $\tts$. $k$-slots* are numbered from left to right, starting with $k$-slot* $0$ at the left side of the node associated to the record. See Fig.~\ref{14-new-reverse}.
\begin{figure}[h!]
  \begin{center}
	\includegraphics[width=.7 \textwidth] {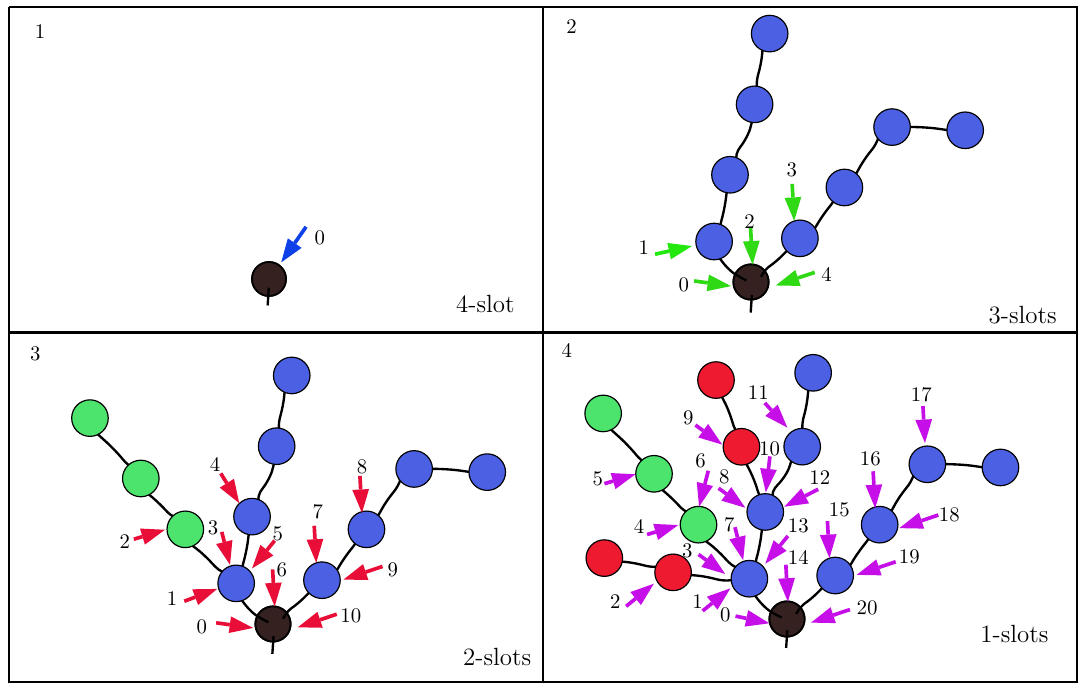}
	\caption{Slot identification. Upper-left: for $m=4$ there is a unique 4-slot* in the root. Upper-right: attaching two 4-branches to this slot we identify five 3-slots*. Attaching one 3-branch to 3-slot 1, identify eleven 2-slots* and finally attaching two 2-branches to 2-slots* 1 and 4, we identify 21 $1$-slots*. To complete the tree in Fig.~\ref{tree-soliton-right} we have to attach two 1-branches (not in this picture). }\label{14-new-reverse}
        \end{center}
      \end{figure}

      %

	
	
	


      \emph{Slot diagram of a tree}

      The slot diagram of the tree is a collection of vectors
      \begin{align}
        x^*=\big((x^*_k(0),\dots,x^*_k(s^*_k-1)):k=1,\dots,m\big)
      \end{align}
      where $m$ is the length of the longest path in the tree and
      \begin{align}
        s^*_m&=1 \text{ and for $k=m,\dots,1$ iterate:}:\nn\\
        x^*_k(i) &= \hbox{number of $k$-branches attached to $k$-slot* number $i$}, \quad i=0,\dots,s^*_k-1\nn\\
        n^*_k&=\sum_{i=0}^{s^*_k-1}x^*_k(i)\\
        s^*_{k-1} &= 1 + \sum_{\ell=k}^m 2(\ell-k)n^*_\ell \label{sk1}
      \end{align}
      In particular the slot* diagram of Fig.\ref{14-new-reverse} is given by $m=4$ and
      \begin{align}
        (s^*_1,s^*_2,s^*_3,s^*_4) &= (21,11,5,1)\nn\\
        x^*_4&= (2)\nn\\
        x^*_3&= (0,1,0,0,0)\label{slot-d}\\
        x^*_2&=(0,1,0,0,1,0, 0,0,0,0,0)\nn\\
        x^*_4&=(0,0,0,0,0,0, 0,0,0,0,0, 0,0,0,0,0, 0,1,0,1,0)\nn
      \end{align}
      using the slot enumeration in Fig.\ref{14-new-reverse}. We illustrate this slot diagram in Fig.~\ref{14-2}.
      \begin{figure}[h!]
	
	\centering
	
	\includegraphics[width=.28 \textwidth] {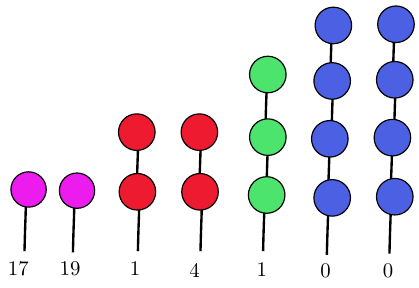}
	
	\caption{Slot* diagram \eqref{slot-d}. There are 2 1-branches attached to 1-slots* number 17 and 19, 2 2-branches attached to 2-slots* 1 and 4; 1 3-branch attached to 3-slot* number 1 and 2 4-branches attached to 4-slot* number 0.}\label{14-2}
      \end{figure}

      \emph{Slots identification of trees II}

     A reverse way to find the slot* diagram of a colored tree with identified slots* is the following. Remove the $1$-branches keeping track of the 1-slot* index each branch was attached to. Assume we have removed the $\ell$-branches for $\ell<k$. Then, remove the $k$-branches keeping track of the $k$-slot* number associated to each removed $k$-branch. The slot* diagram associated to the tree consists on the removed branches and its associated slots* number. See Fig.\ref{14-new}.	
      \begin{figure}[h!]
  \begin{center}
	\includegraphics[width=.7 \textwidth] {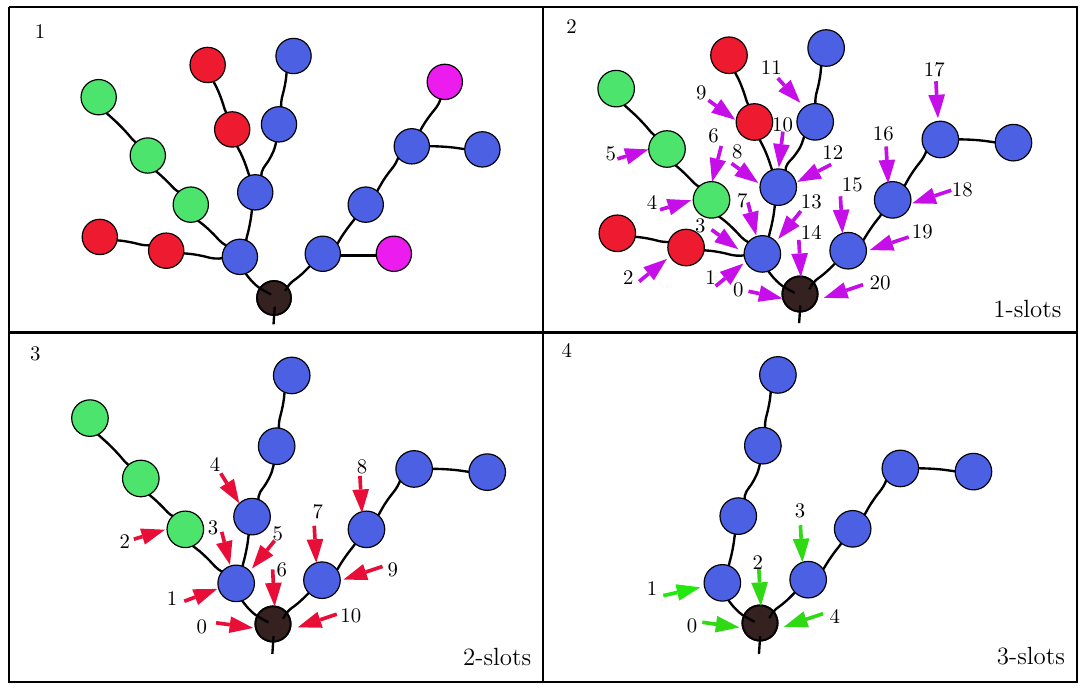}
	\caption{Slot identification of trees II. }\label{14-new}
        \end{center}
      \end{figure}

      Fig.~\ref{14-new}.  Upper-left: a colored tree. Upper-right: erasing 1-branches in the tree, we identify and enumerate $1$-slots*. Lower-left, erasing 1-branches and 2-branches, we identify and enumerate 2-slots*. Lower-right: in a tree with 4-branches we identify and enumerate 3-slots*. The node associated to the record, in black, has one $3$-slot* for each arc.
	
	
	
	
	

	

      \subsection{From paths to trees} We illustrate now the reverse operation. Start with the slot diagram obtained in Fig.\ref{14-2}. Put the root. Let $m$ be the biggest size of the branches in the slot diagram. Attach the $m$-branches to the root. Then successively for $k=m-1,\dots, 1$ attach the $k$ branches to the associated $k$-slot in the tree. The result is illustrated in Fig.\ref{14-new} looking at it backwards: In rectangle 4 we attach 2 4-branches to 4-slot 0 and indicate the place and number of each 3-slot; in rectangle 3 we attach one 3-branch to 3-slot number 1 and so on.

      \begin{proposition}
        \label{p11}
        Given a finite excursion $\vep$ we have $x^{\diamond}[\vep]= x^*[\vep]$.
      \end{proposition}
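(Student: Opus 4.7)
The plan is to prove $x^{\diamond}[\vep] = x^*[\vep]$ by descending induction on the branch/soliton$^{\diamond}$ size, exploiting the attaching procedures described in Section \ref{appsoli} (and at the end of the preceding subsection, for trees), both of which invert the respective slot diagrams. The central observation is that attaching a $k$-branch to a $k$-slot* of the tree geometrically inserts an upward isosceles triangle of hypotenuse $2k$ into the excursion, which is precisely the operation performed when attaching a $k$-soliton$^{\diamond}$ to a $k$-slot$^{\diamond}$ in the HT construction. Combined with the fact that the slot count recursions \eqref{sk} and \eqref{sk1} are identical in form, this suggests that the two constructions are in fact the same geometric process viewed through different combinatorial lenses.

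More precisely, I would establish the following joint claim by descending induction on $k$ from $m$ (the common maximum branch/soliton$^{\diamond}$ size of $\vep$) down to $1$: after attaching all $\ell$-branches in the tree construction and all $\ell$-solitons$^{\diamond}$ in the HT construction for $\ell > k$, (a) the two partial excursions coincide, and (b) the $k$-slots$^{\diamond}$ and $k$-slots* occupy the same boxes of this common partial excursion, with identical left-to-right enumerations starting with label $0$ at the record. The base case $k=m$ amounts to observing that both constructions start from the record preceding the excursion, which supplies a single $m$-slot$^{\diamond}$ in the HT decomposition and a single $m$-slot* at the root of the tree, located at the same spatial position. For the inductive step, granted (a) and (b) at level $k$, the values $x^*_k(i)$ and $x^{\diamond}_k(i)$ are each determined by which triangular bumps of $\vep$ lie between consecutive $k$-slots of the common partial excursion, hence they must coincide; attaching $x^*_k(i)=x^{\diamond}_k(i)$ triangles of hypotenuse $2k$ at each matched $k$-slot yields identical partial excursions at level $k-1$, and the identical slot recursions force the $(k-1)$-slots to appear in the same positions, closing the induction.

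The main technical step is verifying claim (b) at each descending level, which requires reconciling the intrinsic tree-based definition of a $k$-slot* (an arc on the circumference of a node with at least $k$ generations, situated to the right of every outgoing length-$k$ path leading to a leaf) with the ball-configuration definition of a $k$-slot$^{\diamond}$ (either the record or a box in position $\tth_\ell$ or $\ttt_{m'-\ell+1}$ of a soliton$^{\diamond}$ of size $m'>k$). I would carry out this verification through the geometric correspondence of Fig.~\ref{excursion-tree}, under which nodes map to connected components of the region below the excursion cut by integer horizontal lines, and under which a $k$-branch corresponds exactly to the sequence of $k$ stacked regions whose joint colored boundary traces a $k$-soliton$^{\diamond}$. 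Under this correspondence, the arcs on a node's circumference map onto the intervals of the lower horizontal boundary of its region, the incidence points map onto the vertical cuts where child regions descend, and the "rightmost length-$k$ path to a leaf" condition translates to "past the heads and tails of all solitons$^{\diamond}$ of size at least $k$ carried within the region", which is exactly the $k$-slot$^{\diamond}$ condition.

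The main obstacle will be the careful bookkeeping of the left-to-right enumeration at nodes with exactly $k$ generations (which contribute $k$-slots* only on the arcs lying to the right of their downward length-$k$ paths) and at regions carrying nested solitons$^{\diamond}$ of several sizes, where a single attachment may create new $(k-1)$-slots on both diagonal sides of the inserted triangle; one must track that these new slots appear in the same order on the tree side and on the ball-configuration side. Once this correspondence is established level by level, the equality $x^{\diamond}[\vep]=x^*[\vep]$ follows componentwise, and in particular Proposition \ref{ht2t} is recovered by reading off the excursion reconstructed from the common slot diagram.
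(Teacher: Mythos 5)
Your proposal is correct and follows essentially the same route as the paper's own (sketch) proof: both arguments run the two reconstruction procedures in parallel from the top soliton size downwards, using the identification of a $k$-branch with an upward triangle of hypotenuse $2k$ (the correspondence of Fig.~\ref{tria-branch}) to show that at each level the partial excursions and the slot positions coincide, exactly as illustrated by comparing Fig.~\ref{slor-ex} with Fig.~\ref{similar-tria}. Your version merely makes the descending induction and the slot-matching claim (b) explicit, which is the formalization the paper itself says "could be given following this strategy."
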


      \begin{proof}[Sketch proof]
       We give a sketch of the proof showing the basic idea.  Consider an arbitrary slot diagram $x$. We are going to show that the excursion $\vep$ characterized by $x^{\diamond}[\vep]=x$ and the excursion $\vep'$ characterized by $x^*[\vep']=x$ are the same, i.e. $\vep=\vep'$. This implies the statement of the Proposition. Recall that we have constructed the excursion associated to $x^{\diamond}[\vep]$ iteratively in \S\ref{appsoli} gluing one after the other some special triangles. We just showed instead that to construct $x^*[\vep']$ we have to glue recursively the branches like the ones in Fig.~\ref{14-2} glued in Fig.~\ref{14-new} (recall that the gluing procedure has to be followed in the reverse order).

       Since $x$ is the same, both procedures deal with the same number of $k$-triangles and $k$-branches to be attached to the same slots.
       The proof is therefore based on the correspondence between the two different procedures once we fix the basic correspondence of Fig.~\ref{tria-branch} between the two basic building blocks.
       \begin{figure}[h!]
       	
       	\centering
       	
       	\includegraphics[width=.5\textwidth] {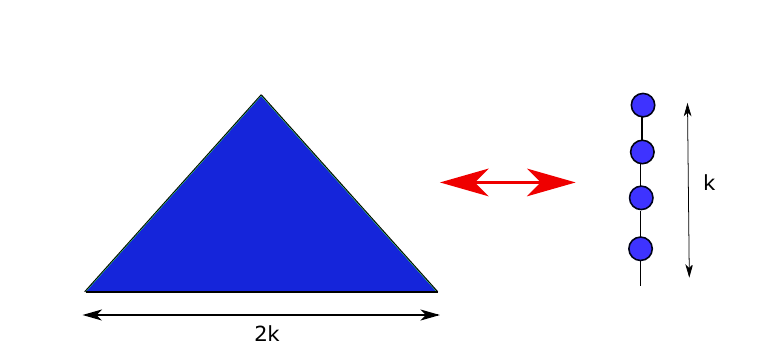}
       	
       	\caption{The correspondence between a triangle and a branch in the two different constructions. Here $k=4$. The excursion associated to each basic building block is the same and coincides with the diagonal boundary of the triangle.}\label{tria-branch}
       	
       \end{figure}
Considering the example of \S\ref{appsoli} we show in Fig.~\ref{similar-tria} the construction of the tree associated to the excursion
$\vep'$ such that $x^*[\vep']=x$ where $x$ is the slot diagram \eqref{scottex}.
This Figure has to be compared with Fig.~\ref{slor-ex} where we constructed the excursion $\vep$ such that $x^{\diamond}[\vep]=x$ where $x$ is again \eqref{scottex}. In Fig.~\eqref{similar-tria} for simplicity we draw just the slots* useful for the attachments. Looking carefully in parallel to the two construction the reader can see that at each step the excursion is the same and the allocations of the slots is again the same. A long formal proof could be given following this strategy.
\begin{figure}[h!]
	
	\centering
	
	\includegraphics[trim = 0 12mm 0 5mm, clip, width=.7\textwidth] {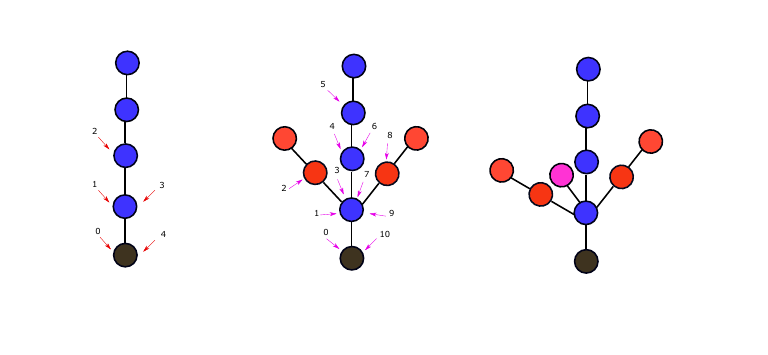}
	
	\caption{The construction of the excursion of Fig.~\ref{slor-ex} using branches instead of triangles. On the left the unique 4-branch attached to the root with the location of the 2-slots* (there are no 3-branches in this case). In the middle the tree after attaching 2-branches to slot* number 1 and slot* number 3, with the location of the 1-slots*. On the right the final tree after attaching the 1-branch to 1-slot* number 3. }	 \label{similar-tria}
	
      \end{figure}
      See also Fig.~\ref{19-ht-tree} for illustration.
 \end{proof}
\begin{figure}[h!]
	
	\centering
	
	\includegraphics[width=\textwidth] {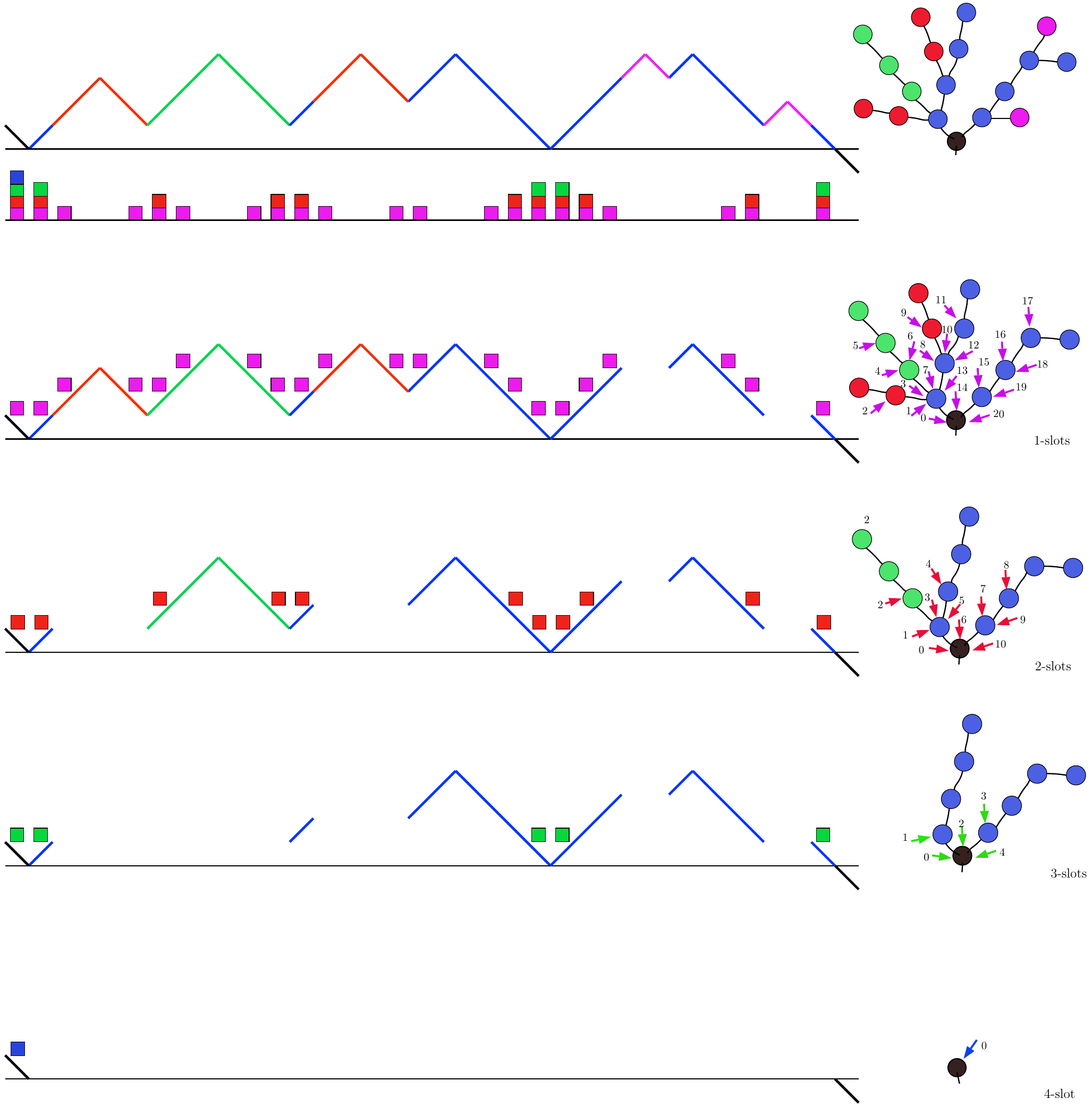}
	
\caption{First line: on the left, the HT soliton decomposition of Fig.~\ref{16-ht+ts} and the localization of the slots$^\diamond$; on the  right, branch decomposition of the tree produced by this excursion from Fig.~\ref{tree-soliton-right} and \ref{14-new}. Following lines: checking that the slot$^\diamond$ localization on the excursion and slot* localization on the tree are the same. In each line we have erased the solitons$^\diamond$/branches smaller than $k$ and show the position of the $k$-slots$^\diamond$/slots*; $k=1,2,3,4$. To see the slot$^\diamond$/slot*  number a soliton$^\diamond$/branch is attached to, look for a square/arrow of the same color in the line below. \label{19-ht-tree}}
 \end{figure}

As a byproduct of the correspondence between planar trees and slot* diagrams we can count the number of planar trees
that have a fixed number of branches. This corresponds to count the number of slot*
diagrams when the numbers $n_{k}^*$ are fixed. For each level $k$ we need to
arrange $n_{k}^*$ branches in $s_k^*$ available slots* and this can be done
in $\binom{n_{k}^*+s_k^*-1}{n_{k}^*}$ different ways. Since this can be done independently on each level we have therefore that the numbers of planar trees
having $n_{k}^*$ branches of length $k$ is given by
\begin{equation}\label{cou}
\prod_{k=1}^M\binom{n_{k}^*+s_k^*-1}{n_{k}^*}=
\prod_{k=1}^M\binom{n_{k}^*+2\sum_{j=k+1}^M(j-k)n_{j}^*}{n_{k}^*}\,,
\end{equation}
where we used \eqref{sk1}.

\FloatBarrier 
\section{Soliton distribution}
\label{sec5}
We report in \S\ref{ss51} a family of distributions on the set of excursions proposed by the authors \cite{fg18} based on the slot decomposition of the excursions. In particular, the slot diagram of the excursion of a random walk satisfies that given the $m$-components for $m>k$, the distribution of the $k$-component is a vector of independent Geometric random variables; the size of the vector is a function of the bigger components.  As a consequence, we obtain that the distribution of the $k$-branches of the tree associated to the excursion of the random walk given the $m$-branches, for $m>k$, is a vector of independent geometric random variables.

Theorem \ref{t6} considers a random ball configuration consisting on iid Bernoulli of parameter $\lambda<\frac12$, conditioned to have a record at the origin and shows that their components are independent and that the $k$-component consists of iid geometric random variables.

Since the measure is given in terms of the number of solitons and slots of the excursion, and those numbers are the same in all the slot diagrams we have introduced, we just work with a generic slot diagram.

\subsection{A distribution on the set of excursions}
\label{ss51}
Let $n_k(\vep)$ be the number of $k$-solitons in the excursion $\vep$ and for $\alpha=(\alpha_k)_{k\ge 1}\in[0,1)^\N$ define
\begin{align}
  \label{aaa}
  Z_{\alpha}:=\textstyle{\sum_{\vep\in\cE} \prod_{k\ge1} \alpha_k^{n_k(\vep)}},
\end{align}
with the convention $0^0=1$. Define
\begin{align}
  \label{eq:cA1}
    \cA&:= \{\alpha \in [0,1)^\N: Z_\alpha<\infty\}
\end{align}
This set has a complex structure since the expression \eqref{aaa} is difficult to handle.
For $\alpha \in \cA$ define the probability measure $\nu_\alpha$ on $\cE$ by
\begin{align}
  \label{nusola}
  \nu_\alpha (\vep):= \frac1{Z_{\alpha}}\textstyle{\prod_{k\ge1} \alpha_k^{n_k(\vep)}}\,.
\end{align}
For $q\in (0,1]^\N$ define the operator $A: q\mapsto \alpha$ by
\begin{align}
  \label{eq:Aq1}
  \alpha_1:= (1-q_1);\qquad \alpha_k&:= (1-q_k) \textstyle{\prod_{j=1}^{k-1}}q_j^{2(k-j)},\quad \hbox{for }k\ge 2.
\end{align}
Reciprocally, define the operator
 $Q: \alpha\mapsto q$ by
\begin{align}
 q_1&:=1-\alpha_1\quad\hbox{and iteratively, } \label{ppc33}\\
  q_k&:=1-\frac{\alpha_k}{\prod_{j=1}^{k-1}q_j^{2(k-j)}}\,, \qquad k\ge 2.\label{ppc34}
\end{align}
Let
\begin{align}
  \label{cQ1}
\cQ&:= \{q\in (0,1]^\N: \textstyle{\sum_{k\ge1}} (1-q_k)<\infty\}.
\end{align}
The next results gives an expression of $\nu_\alpha(\vep)$ in terms of the slot diagram of $\vep$.
\begin{theorem}[Ferrari and Gabrielli \cite{fg18}] \label{fg1}\
     \begin{enumerate}
   \item[(a)] Let $q\in\cQ$, $\alpha=Aq$ and $\nu_\alpha$ given by \eqref{nusola}. Then, $\alpha\in \cA$ and
\begin{align}
  \label{ss24}
    \nu_\alpha(\vep) = \textstyle{\prod_{k\ge 1}}(1-q_k)^{n_k}\, q_k^{s_k}
\end{align}
where $n_k$ and  $s_k$ are the number of $k$-solitons, respectively $k$-slots, of $\vep$.

 \item[(b)]  The map $A:\cQ\to\cA$ is a bijection with $Q=A^{-1}$.
\end{enumerate}
\end{theorem}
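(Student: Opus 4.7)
The plan is to prove both parts via a single explicit computation of the partition function $Z_\alpha$. First I would use the bijection between $\cE$ and slot diagrams, together with the counting formula \eqref{cou}, to rewrite
\[
  Z_\alpha \;=\; \sum_{n} \prod_{k\ge 1} \alpha_k^{n_k}\,\binom{n_k+s_k-1}{n_k},
\]
where the sum runs over nonnegative integer vectors $n=(n_1,n_2,\dots)$ with finite support and $s_k=s_k(n)=1+2\sum_{j>k}(j-k)n_j$ as in \eqref{sk}. Next, substituting $\alpha_k=(1-q_k)\prod_{j<k}q_j^{2(k-j)}$ from \eqref{eq:Aq1} and swapping the order of the double product $\prod_k\prod_{j<k}$ into $\prod_j\prod_{k>j}$, the identity \eqref{sk} collapses the exponent of $q_j$ to $s_j-1$, yielding the rearrangement
\[
  \prod_{k\ge 1} \alpha_k^{n_k} \;=\; \prod_{k\ge 1} (1-q_k)^{n_k}\, q_k^{s_k-1}.
\]

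The main computation is then an iterative summation. Summing over $n_1$ first with $n_2,n_3,\dots$ held fixed, the negative binomial identity $\sum_{n\ge 0}\binom{n+s-1}{n}(1-q)^{n}=q^{-s}$ produces $q_1^{-s_1}$, which combines with the factor $q_1^{s_1-1}$ already present to leave $q_1^{-1}$, independent of $n_2,n_3,\dots$. Iterating level by level, the entire sum factors completely, giving
\[
  Z_\alpha \;=\; \prod_{k\ge 1} q_k^{-1}.
\]
This infinite product is finite precisely when $\sum_k(1-q_k)<\infty$, which holds by $q\in\cQ$; hence $\alpha\in\cA$. Dividing, formula \eqref{ss24} for $\nu_\alpha$ is immediate from \eqref{nusola}, and part (a) is proved.

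For part (b), the composition identities $Q\circ A=\mathrm{id}$ and $A\circ Q=\mathrm{id}$ hold as formal operations on sequences by direct comparison of \eqref{eq:Aq1} with \eqref{ppc33}--\eqref{ppc34}, and part (a) gives $A(\cQ)\subseteq\cA$. The remaining content is to show $Q(\cA)\subseteq\cQ$, i.e.\ that for $\alpha\in\cA$ the sequence $q=Q\alpha$ actually lies in $(0,1]^\N$ and satisfies $\sum(1-q_k)<\infty$. I would argue inductively: assuming $q_1,\dots,q_{k-1}\in(0,1]$, consider the partial sum $\tilde Z_{\alpha,k}\le Z_\alpha$ of $Z_\alpha$ restricted to slot diagrams with maximal soliton size at most $k$. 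Repeating the iterative computation of part (a) up to level $k$ yields $\tilde Z_{\alpha,k}=\prod_{j\le k}q_j^{-1}$, which is finite only if $q_k>0$; so $q_k\in(0,1]$. Iterating, all $q_k\in(0,1]$, and then finiteness of $Z_\alpha=\prod_k q_k^{-1}$ forces $\sum_k(1-q_k)<\infty$, so $q\in\cQ$.

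The main subtlety is that $s_k$ depends on the higher coordinates $n_{k+1},n_{k+2},\dots$, so the sum does not naively factor. The algebraic crux, which underlies the whole definition of the transformation $A$, is that after the substitution $\alpha\mapsto q$ the $n_k$-sum produces $q_k^{-1}$ regardless of $s_k$; this decoupling is what makes the iterative factorization go through and is the essential point where care is needed in writing out the argument rigorously.
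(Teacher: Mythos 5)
Your proof is correct, and for part (a) it is essentially the paper's argument: the algebraic crux in both cases is the substitution \eqref{eq:Aq1} combined with the exponent-telescoping identity $\prod_k\alpha_k^{n_k}=\prod_k(1-q_k)^{n_k}q_k^{s_k-1}$ coming from \eqref{sk}. The paper runs this identity in the opposite direction --- it rewrites $\prod_k(1-q_k)^{n_k}q_k^{s_k}$ as $\bigl(\prod_n q_n\bigr)\prod_k\alpha_k^{n_k}$ and then asserts $Z_\alpha=\bigl(\prod_n q_n\bigr)^{-1}$, a fact which implicitly rests on the right-hand side of \eqref{ss24} summing to $1$ over $\cE$ (the conditional-geometric structure \eqref{ss36}--\eqref{ss38}); your version makes that normalization explicit by counting excursions with the stars-and-bars formula \eqref{cou} and performing the iterated negative-binomial summation, which is the same computation written out in full. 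Where you genuinely go beyond the paper is part (b): the paper only checks the formal composition identities and $A(\cQ)\subseteq\cA$, explicitly deferring the harder inclusion $Q(\cA)\subseteq\cQ$ to \cite{fg18}. Your inductive argument for that inclusion --- truncating $Z_\alpha$ to slot diagrams with maximal soliton size at most $k$, observing that the restricted sum equals $\prod_{j\le k}q_j^{-1}$ and that the outermost sum $\sum_{n_k}(1-q_k)^{n_k}$ (with $s_k=1$ there) diverges unless $q_k>0$, then letting $k\to\infty$ to extract $\sum_k(1-q_k)<\infty$ from $Z_\alpha<\infty$ --- is sound and yields cleanly what the paper leaves to the reference. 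The only points needing care in a full write-up are the ones you already flag (summing innermost in $n_1$, justified by nonnegativity of all terms) together with the trivial remark that $q_k\le 1$ is automatic from $\alpha_k\ge 0$.
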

The proof of (a) given below shows that if $q\in\cQ$ then $Aq\in\cA$ with $Z_{Aq}= (\prod_{k\ge1}q_k)^ {-1}$. On the other hand, to complete the proof of (b) it suffices to show that $Q\alpha\in\cQ$. The proof of this fact is more involved and can be found in \cite{fg18}.

If we denote $x_k^\infty =(x_k, x_{k+1}, \dots)$, the expression \eqref{ss24} is equivalent to the following (with the convention $q_0:=0$ to take care of the empty excursion).
  \begin{align}
    \nu_\alpha\left(M=m\right)& = (1-q_m)\,\textstyle{\prod_{\ell>m}}q_\ell,    \quad m\ge 0,\label{ss36}\\[2mm]
    \nu_\alpha\big(x_m(0)\big| M=m\big)&= (1-q_m)^{x_m(0)-1} q_m,  \label{ss37}\\[2mm]
    \nu_\alpha\big(x_k\big| x_{k+1}^{\infty}\big)&= (1-q_k)^{n_k} q_k^{s_k}, \label{ss38}
  \end{align}
where we abuse notation writing $x_m$ as ``the set of excursions $\vep$ whose $m$-component in $x[\vep]$ is $x_m$'', and so on. Recall that $n_k$ is the number of $k$-solitons of $x$ and $s_k$ is the number of $k$-slots of $x$, a function of $ x_{k+1}^{\infty}$.

Formulas \eqref{ss36} to \eqref{ss38} give a recipe to construct the slot diagram of a random excursion with law $\nu_\alpha$: first choose a maximal soliton-size $m$ with probability \eqref{ss36} and use \eqref{ss37} to determine the number of maximal solitons $x_m(0)$ (a Geometric$(q_m)$ random variable conditioned to be strictly positive). Then we use \eqref{ss38} to construct iteratively the lower components. In particular, \eqref{ss38} says that under  the measure $\nu_\alpha$ and conditioned on $x_{k+1}^{\infty}$, the variables $\left(x_k(0), \dots x_k(s_k-1)\right)$ are i.i.d. Geometric$(q_k)$.

  \begin{proof}[Proof of Theorem \ref{fg1} (a)]
      Using formula \eqref{sk}, we have
      \begin{align}
        \textstyle{\prod_{k\ge 1}}(1-q_k)^{n_k}\, q_k^{s_k}
&= \textstyle{\prod_{k\ge 1}}(1-q_k)^{n_k} \,q_k^{1+ \sum_{\ell>k} 2(\ell-k) n_\ell}\\[2mm]
        &=\Bigl(\textstyle{\prod_{n\ge1}}q_n\Bigr)\,
          \textstyle{\prod_{k\ge1}}
          \Big[(1-q_k)\textstyle{\prod_{j=1}^{k-1}} \,q_\ell^{2(k-j)}\Big]^{n_{k}}\\[2mm]
        &= \Bigl(\textstyle{\prod_{n\ge1}}q_n\Bigr)\, \textstyle{\prod_{k\ge1}}\alpha_k^{n_k}, \qquad\hbox{denoting }\alpha:= Aq\\
        &= \nu_\alpha(\vep),
      \end{align}
      because $Z_\alpha =\big( \prod_{n\ge1}q_n\big)^{-1}<\infty$ since $q\in \cQ$.
    \end{proof}

\subsection{Branch distribution of the random walk excursion tree} For $\lambda\le \frac12$ define $\alpha=\alpha(\lambda)$ by
  \begin{align}\label{aQ0}
      \alpha_k := \left(\lambda(1-\lambda)\right)^k.
  \end{align}
Then  $\alpha(\lambda) \in\cA$  and $\nu_{\alpha(\lambda)}$ is the law of the excursion of a simple random walk that has probability $\lambda$ to jump up and $1-\lambda$ to jump down. A computation using the Catalan numbers shows that
\begin{align}
  \label{zal}
    Z_{\alpha(\lambda)} = \frac1{1-\lambda}.
  \end{align}
On the other hand, the probability that the random walk perform a fixed excursion with length $2n$ is $\lambda^n(1-\lambda)^{n+1}$, where the extra $(1-\lambda)$ is the probability that the walk jumps down after the $2n$ steps of the excursion. This gives \eqref{zal} with no computations.

  In terms of the branches of the tree associated to the excursion, one chooses the size of the largest branch $m$ of the tree with \eqref{ss36} and use \eqref{ss37} to decide how many maximal branches are attached to the root of the tree. Then identify the $(m-1)$ slots and proceed iteratively using \eqref{ss38} to attach the branches of lower size. Given the branches of size bigger than $k$ already present in the tree, the number of $k$-branches per $k$-slot $\left(x_k(0), \dots x_k(s_k-1)\right)$ are i.i.d. Geometric$(q_k)$ given iteratively by
  \begin{align}
    \label{eq:1}
    q_1 = 1-\lambda(1-\lambda),\quad
    q_k = 1-\frac{(\lambda(1-\lambda))^k}{\textstyle{\prod_{j=1}^{k-1}} \,q_j^{2(k-j)}},\quad k\ge2.
  \end{align}

\subsubsection{Geometric branching processes} Let $\rho$ be a probability measure on $\mathbb N\cup \{0\}$. A branching process with offspring distribution $\rho$
is a random growing tree defined as follows. Let $\left(X_i^j\right)_{i,j\in \mathbb N}$ be a double indexed sequence of i.i.d. random variables having law $\rho$.

At initial time zero the tree is constituted by one single vertex, the root. At time 1
there are $X_1^1$ individuals on the first generation, all of them are generated by the root and are drawn as vertices connected to the root. Give to them an arbitrary order from left to right embedding the tree on a plane. At time 2 each individual of the first generation produces independently a number of new vertices with distribution $\rho$. More precisely the number of vertices produced
by the individual number $i$ of the first generation is $X_2^i$. Every such new vertex is connected by an edge to the parent vertex of the previous generation with an arbitrary order from left to right given by the embedding.
Continue iteratively in this way with $X_j^i$ being the number of vertices of the generation $j$ produced by the individual number $i$ of the generation $j-1$.

If $\sum_{k=0}^{+\infty}k\rho(k)<1$ the branching process is called \emph{subcritical} and the above procedure produces a.e. a finite random planar tree. See \cite{MR942038} for more details, references, and the relation with the law of the corresponding excursions.

\smallskip
Let us consider the case when $\rho$ is the law of a geometric random variable Geometric($1-\lambda$), i.e. $\rho(k)=\mathbb P(X^i_j=k)=(1-\lambda)\lambda^k$, $k=0,1,\dots$.
In this case the probability of any given finite tree is given by
$(1-\lambda)^{|V|}\lambda^{|V|-1}$, where $|V|$ is the number of vertices, included the root. Since $|V|-1=2n$ that is the length of the corresponding excursion (the correspondence is described in section \ref{sected}), we have that
the law of the excursion
coincides with the law of the excursion of a simple random walk having probability $\lambda$ to jump up and probability $1-\lambda$ of jumping down (see \cite{MR942038} for more details).

\smallskip
Using the result discussed in this paper we obtain therefore an alternative procedure of construction of a geometric branching process using independent but not identically distributed geometric random variables.

Consider the parameters $(q_k)_{k\in \mathbb N}$ defined as in \eqref{eq:1}.
The law of the maximal generation $M$ of the branching process is given by the right hand side of \eqref{ss36}, i.e.
$$\mathbb P(M=m)=(1-q_m)\prod_{l>m}q_l\,, \qquad m\geq 0\,.$$

Once the maximal generation has been fixed we attach, directly to the root, a number of maximal branches os size $M=m$ according to the distribution given on the right hand side of \eqref{ss37}, i.e. a Geometric($q_m$) conditioned to be positive. Se for example Figure \ref{14-new-reverse} where $m=4$ and we attach two 4 branches directly to the root in the frame number 2.

We proceed  now iteratively. Suppose that all the branches of size bigger than $k$ have been attached. Consider all the k-slots* of the tree and attach
to all of then a Geometric($q_k$) number of $k$-branches (see for example frame 3 of Figure \ref{14-new-reverse} where we attach 3-branches and frame 4 where we attach 2-branches).

The final random tree obtained this way is a branching process with offspring law $\rho$ given by Geometric($1-\lambda$).

\penalty 10000
  \subsection{Soliton decomposition of product measures in $\{0,1\}^\Z$}

\paragraph{Forest of trees associated to configurations with infinitely many balls}

Consider a configuration $\eta$ (with possibly infinitely many balls) and assume the walk $\xi=W\eta$ has a record at the origin and all records, that is $r(i,\xi)\in\Z$ for all $i\in\Z$. Let $(\vep^i)_{i\in\Z}$ be the excursion decomposition of $\xi$. Associating to each excursion the corresponding tree, we finish with a forest of trees each associated with an excursion, and sharing the slot diagrams of the excursion. See Fig.~\ref{forest-soliton} for the trees associated to the ball configuration in Fig.~\ref{cerchi-q}.
\begin{figure}[h!]
	
	\centering
	
	\includegraphics{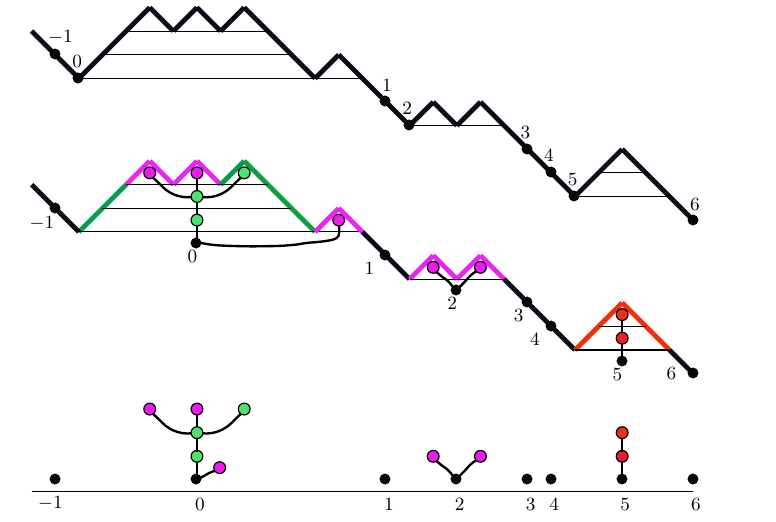}
	
	\caption{Up: Walk representation of the ball configuration of Fig.~\ref{cerchi-q}, with records represented by a black dot and labeled from $-1$ to 6. Middle: black dots representing records with nonempty excursions have been displaced to facilitate the picture. Each excursion tree has been decomposed into branches, with the corresponding colors. Down: the forest representing this piece of walk.}\label{forest-soliton}
	
      \end{figure}

\paragraph{Soliton decomposition of configurations with infinitely many balls}  For the same walk $\xi$ with excursion components $(\vep^i)_{i\in\Z}$, consider $(x^i)_{i\in\Z}$, the set of slot diagrams associated to those excursions. Recall $\vep^i$ is the excursion between Record $i$ and Record $i+1$.

  We define the vector $\zeta\in \big((\N\cup\{0\})^\Z\big)^{\N}$ obtained by concatenation of the $k$-components of $x^i$ as follows:
  \begin{align}
       S_k^0 &:= 0,\quad\hbox{ for all }k\ge 1\nn\\
    S_k^{i'}-S_k^{i} &:= s_k^i+\dots+ s_k^{i'-1}+ i'-i, \hbox{ for }i<i',\, k\ge 1, \label{z87}\\
    \zeta_k(S_k^i+ j) &:= x_k^{i}(j), \quad j\in\{0,\dots,s^i_k-1\},\, k\ge 1.\nn
  \end{align}
  The components of $\zeta$ are $\zeta_k(j)\in \mathbb N\cup\{0\}$ with $k\in \mathbb N$
and $j\in \mathbb Z$.
  For example, Fig.~\ref{forest-soliton} contains a piece of $\xi$ between Record $-1$ and Record 6. The excursions $\vep^{-1},\vep^1,\vep^3,\vep^4$ are empty, so $s^i_k= 1$ for $k\ge1$ and $x^i_k(0)= 0$ for $k\ge 0$ and $i=-1,1,3,4$.
The corresponding slot diagrams are
  \begin{align}
    \label{x43}
    &x^{-1}=x^1=x^3=x^4=\emptyset\nn\\
    &x^0_3= (1), x^0_2=(0,0,0),  x^0_1=(0,0,2,0,1) \nn\\
    &x^2_1=(2)\nn\\
    &x^5_2=(1), x^5_1= (0,0,0)
  \end{align}
  So that the maximal soliton number in the slot diagram $i$ is $m^i= 0$ for $i\in\{-1,1,3,4\}$,   $m^0=3$, $m^2=2$ and $m^5=2$.

  \emph{Young diagram}. To better explain graphically the definitions \eqref{z87} and construct the piece of configuration $\zeta$ corresponding to the above excursions, we associate a Young diagram to each slot diagram, as follows: for each soliton size $k$ on the slot diagram $x$ pile one row of size $s_k$ for $k\le m$ and one row of length 1 for all $k>m$. We finish with an infinite column at slot 0 and all $k$-slots of the same number piled on the same column. Taking the vertical coordinate as $k$ and the horizontal coordinate as $j$, in box $(j,k)$ put $x_k(j)$. Fig.~\ref{forest-diagram} shows the Young diagrams corresponding to \eqref{x43}.
\begin{figure}[h!]
	
	\centering
	
	\includegraphics[width=.8 \textwidth]{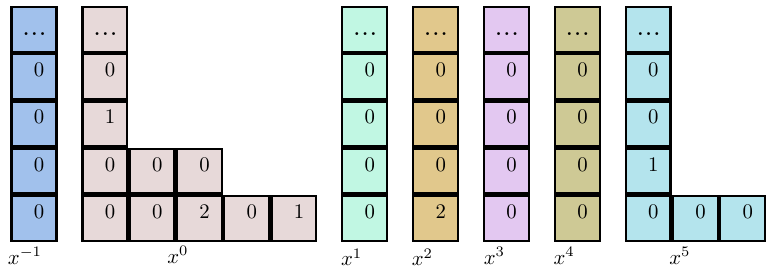}
	
	\caption{Young shape of slot diagrams of excursions $-1$ to 5 of Fig.~\ref{forest-soliton}. Dots at top of columns mean that this is an infinite column of zeroes from that point up.}\label{forest-diagram}
	
\end{figure}

Once we have the slot diagrams of the excursions of $\xi$ as decorated Young tableaux, to obtain $\zeta$ it suffices to glue the rows of the same height into a unique row justified by column 0, as in the Fig.~\ref{forest-decomposition}.
\begin{figure}[h!]
	
	\centering
	
	\includegraphics[width=.68 \textwidth]{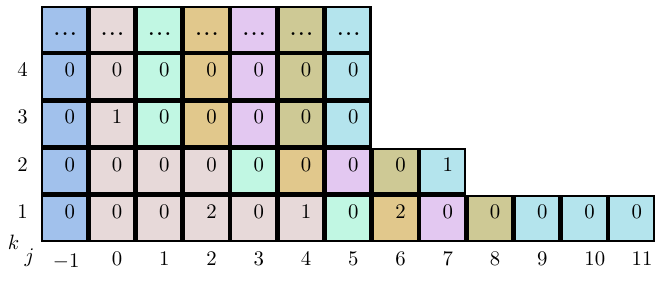}
	
	\caption{Justify the slots diagrams with the column at the 0-slot. The result is the piece of configuration $\zeta$ produced by the excursions $-1$ to $5$. In the vertical coordinate the $k$-component, in the horizontal coordinate, the slot number. For example $\zeta_2(7) = 1$.}\label{forest-decomposition}
	
      \end{figure}
  We call $\cA^+$ the set of $\alpha$ such that the mean excursion size under $\nu_\alpha$ is finite:
  \begin{equation}
  \mathcal A^+:=\left\{\alpha\,:\, \textstyle{\sum_{k\ge1}2k\rho_k(\alpha)}<+\infty \right\}\,.\label{a+}
  \end{equation}
  By definition we have $\mathcal A^+\subseteq \mathcal A$. We define also
  $$
  \mathcal Q^+:=\left\{q\,:\, \textstyle{\sum_{k\ge1}}\,k(1-q_k)<+\infty\right\}\,.\label{sumq+}
  $$
      \begin{theorem}[From independent solitons to independent iid geometrics \cite{fg18}]
        \label{t6}
   If $\alpha\in\cA^+$ and $(\vep^i)_{i\in\Z}$ are iid excursions with distribution $\nu_\alpha$, then $(\zeta_k)_{k\in\Z}\in(\N\cup\{0\})^\Z$, as defined in \eqref{z87} is a family of independent configurations and for each $k$, $ (\zeta_k(j))_{j\in\Z}$ are iid random variables with distribution Geometric($q_k$), where $A^{-1}\alpha=q\in \mathcal Q^+$.
   \end{theorem}

   \begin{proof}
     A part of the proof is a direct consequence of \eqref{ss36}-\eqref{ss37}-\eqref{ss38}. For the proof of the remaining statements see \cite{fg18}.
   \end{proof}

   \section*{Acknowledgments} We thank Leo Rolla for many fruitful discussions and Jean Fran\c cois Le Gall for pointing out relevant references on excursion trees.

This project started when PAF was visiting GSSI at L'Aquila en 2016. He thanks the hospitality and support.
Part of this project was developed during the stay of the authors at the Institut Henri  Poincar\'e - Centre \'Emile Borel during the trimester \emph{Stochastic Dynamics Out  of Equilibrium}. We thank this institution for hospitality and support.

\addcontentsline{toc}{section}{References}

\bibliographystyle{acm}

\end{document}